\documentclass[11pt, reqno]{amsart}
\usepackage[utf8]{inputenc}	
\usepackage{amssymb,amscd}
\usepackage{multirow,booktabs}
\usepackage[table]{xcolor}
\usepackage{fullpage}
\usepackage{lastpage}
\usepackage{fancyhdr}
\usepackage{mathrsfs}
\usepackage{wrapfig}
\usepackage{graphicx}
\usepackage{setspace}
\usepackage{calc}
\usepackage{bm}
\usepackage{tikz}
\usetikzlibrary{arrows.meta, positioning}
\usepackage{cite}
\usepackage{enumerate}
\usepackage{subfigure}
\usepackage{multicol}
\usepackage[colorlinks,
            linkcolor=blue,
            citecolor=red,
            ]{hyperref}
\usepackage{cancel}
\usepackage[all]{xy}
\usepackage{geometry}
\usepackage{empheq}
\usepackage{framed}
\usepackage{xcolor}
\hypersetup{pdftitle={Transition probabilities of step-reinforced random walks},
 pdfauthor={Yuval Peres and Shuo Qin}}

\theoremstyle{plain}
\newtheorem{theorem}{Theorem}[section]
\newtheorem{lemma}[theorem]{Lemma}
\newtheorem{proposition}[theorem]{Proposition}
\newtheorem{corollary}[theorem]{Corollary}

\theoremstyle{remark}
\newtheorem{definition}{Definition}
\newtheorem{example}{Example}[section]
\newtheorem{remark}{Remark}[section]

\usepackage{indentfirst}
\begin{document}

\newcommand{\QQ}{\mathbb{Q}}
\newcommand{\RR}{\mathbb{R}}
\newcommand{\ZZ}{\mathbb{Z}}
\newcommand{\NN}{\mathbb{N}}
\newcommand{\CC}{\mathbb{C}}
\newcommand{\EE}{\mathbb{E}}
\newcommand{\Var}{\operatorname{Var}}
\newcommand{\CalC}{\mathcal{C}}
\newcommand{\PP}{\mathbb{P}}
\newcommand{\Rd}{\mathbb{R}^d}
\newcommand{\Rn}{\mathbb{R}^n}
\newcommand{\FF}{\mathcal{F}}
\newcommand{\GG}{\mathcal{G}}
\newcommand{\F}{\mathscr{F}}

\author[Yuval Peres]{Yuval Peres}
\address[Yuval Peres]{Beijing Institute of Mathematical Sciences and Applications}
\email{yperes@gmail.com}
  
\author[Shuo Qin]{Shuo Qin}
\address[Shuo Qin]{Beijing Institute of Mathematical Sciences and Applications, and Yau Mathematical Sciences Center, Tsinghua University}
\email{qinshuo@bimsa.cn}

\title{Transition probabilities of step-reinforced random walks}
\date{}

\begin{abstract}
The step-reinforced random walk (SRRW) either repeats a uniformly chosen past step or takes a fresh independent step. We consider a generalized SRRW on groups, where the selected past step is transformed by a random map. The transformations are independent of the fresh steps but may be dependent on one another.

For every reinforcement parameter $\alpha<1$, we obtain upper bounds on transition probabilities in terms of the geometry of the group. On $\mathbb{R}^d$, a non-singular step distribution yields bounds of order $n^{-d/2}$ for balls of any fixed radius, uniformly over their centers and without any moment assumption, and hence transience for $d\geq 3$. On finitely generated groups, we give bounds in terms of the isoperimetric profile. We also prove exponential decay for the elephant random walk on every nonamenable Cayley graph with a finite symmetric generating set and every memory parameter $p<1$. In particular, this answers a question of Mukherjee for Cayley trees.
\end{abstract}

\maketitle

\section{Introduction}

\subsection{Definitions and related models}
\label{secdef}

In recent years, the step-reinforced random walk has attracted considerable attention. At each time step, the walk either replicates a uniformly chosen step from its past or takes a fresh step independent of the history. In this paper, we consider the following generalization, where the selected step may be transformed rather than simply repeated. Throughout, we assume that $(G,\cdot)$ is either the additive group $(\mathbb{R}^d,+)$ equipped with the Borel $\sigma$-algebra or a discrete group, and assume that $\mu$ is a probability measure on $G$.

\begin{definition}[A generalized SRRW on a group]
\label{defSRRWG}
Let $(g_n)_{n\geq 1}$ be i.i.d. random variables with law $\mu$, let $(\xi_n)_{n\geq 2}$ be i.i.d. Bernoulli random variables with parameter $\alpha\in[0,1]$, and let $(u_n)_{n\geq 2}$ be independent, with $u_n$ uniform on $\{1,\ldots,n-1\}$. Let $(T_n)_{n\geq 2}$ be random measurable transformations of $G$, where $(\omega,g)\mapsto T_n(\omega,g)$ is jointly measurable. We assume that the four sequences
\[
(g_n)_{n\geq 1},\qquad (\xi_n)_{n\geq 2},\qquad
(u_n)_{n\geq 2},\qquad (T_n)_{n\geq 2}
\]
are mutually independent. No independence among the transformations $T_n$ is required.

Set $S_0=e_G$, $X_1=g_1$, and, for $n\geq 2$, define
\[
X_n=\begin{cases}
g_n,&\xi_n=0,\\
T_n(X_{u_n}),&\xi_n=1.
\end{cases}
\qquad S_n=X_1\cdot X_2\cdots X_n\quad(n\geq 1).
\]
We call $S=(S_n)_{n\geq 0}$ a generalized step-reinforced random walk on $G$ with reinforcement parameter $\alpha$, step distribution $\mu$, and transformations $(T_n)_{n\geq 2}$.
\end{definition}

\begin{remark}
\label{remforestdependent}
The results below for generalized SRRWs also hold if the transformations depend on the past of $(\xi_n,u_n)_{n\geq 2}$, but not on the fresh steps. More precisely, take a random element $\Theta$ independent of the three mutually independent sequences $(g_n)$, $(\xi_n)$ and $(u_n)$, and let
\[
T_n(g)=\mathscr T_n\bigl(\Theta,(\xi_j,u_j)_{2\leq j<n},g\bigr)
\]
for measurable maps $\mathscr T_n$. The sequence $(g_n)$ is then independent of the entire forest and all transformations. This independence is used in the forest representation below; the dependence on past forest variables also ensures that the walk is adapted to the filtration used for the conditional estimates. 
\end{remark}

The generalized SRRW includes many existing models.
When $T_n = \operatorname{Id}$ for all $n\geq 2$, the walk $S$ is the usual SRRW on groups (see \cite[Definition 1.1]{peres2026mixing}). If $G=(\RR^d, +)$ and $(T_n)_{n\geq 2}$ are linear transformations on $\RR^d$, i.e., $T_n(X):=A_nX$ where $(A_n)_{n\geq2}$ is a sequence of $d\times d$ random matrices independent of the fresh steps and reinforcement variables, then the following models are special cases of such generalized SRRWs:
\begin{itemize}
    \item the random walk with counterbalanced steps introduced by Bertoin \cite{MR4756572}, where $A_n \equiv -I_d$;
    \item the unbalanced step-reinforced random walk introduced by Aguech, Ben Hariz, El Machkouri, and Faouzi \cite{AGUECH2026105026}, where $(A_n)_{n\geq 2}$ are i.i.d. and take values in $\{I_d,-I_d\}$;
    \item the random walk with echoed steps introduced by Portillo del Valle \cite{del2025random}, where $d=1$, $A_n=a_n$, and $(a_n)_{n\geq 2}$ are i.i.d.\ real random variables with the echo law;
   \item the elephant random walk (ERW) introduced by Schütz and Trimper \cite{schutz2004elephants} and its multidimensional version by Bercu and Laulin \cite{MR3962977}, where $A_n$ is given by \cite[Equation (2.1)]{MR3962977}. 
\end{itemize}

Mukherjee \cite{mukherjee2025elephant} recently extended the ERW to finitely generated groups: Suppose $G$ is a group with a finite symmetric generating set $\Gamma$ containing at least two elements. Let $G_{\Gamma}$ denote the Cayley graph of $G$ with respect to $\Gamma$. The first step of the ERW on $G_{\Gamma}$ is sampled uniformly at random from $\Gamma$. At each time step $n\geq 2$, the elephant chooses a step from the past uniformly at random, say $X_{u_n}$, and then, with probability $p$ (which is called the memory parameter), repeats this step; otherwise, the next step is sampled uniformly from $\Gamma \backslash \{X_{u_n}\}$. This extension is also a special case of the generalized SRRW, see Lemma \ref{ERWgSRRW}.

Note that when $\alpha=1$, for $n\geq 2$, one has $S_n=X_1 \cdot T_2(X_{u_2}) \cdot T_3(X_{u_3})\cdots T_n(X_{u_n})$. Hence, the asymptotic behavior of $S$ strongly depends on the choice of $(T_n)_{n\geq 2}$. In this paper, \emph{we focus on the case $\alpha<1$ and aim to obtain upper bounds on the transition probabilities of $S$ on infinite groups uniformly over the transformations allowed in Definition \ref{defSRRWG}} (and in fact, the statements of our main results will often omit the transformations $(T_n)_{n\geq 2}$).

\subsection{Main results}

For a generalized SRRW $S$ on $\RR^d$, we say that $S$ is transient if $\|S_n\|\to\infty$ almost surely, where $\|\cdot\|$ is the Euclidean norm. We call a probability measure $\mu$ on $\RR^d$ non-singular if it is not supported on any proper affine hyperplane, equivalently, \[ \operatorname{span}\bigl(\operatorname{supp}\mu-\operatorname{supp}\mu\bigr) =\mathbb R^d. \] This condition is stronger than $ \operatorname{span}(\operatorname{supp}\mu)=\mathbb R^d$. The two conditions are equivalent whenever $0\in\operatorname{aff}(\operatorname{supp}\mu)$, in particular when the support is symmetric about the origin, or when $\mu$ has a finite first moment and mean zero.

Proposition \ref{Strandabove3} gives transience in dimensions $d\geq 3$ for these models whenever they admit a representation with $\alpha<1$ and a non-singular step distribution. For the usual SRRW, transience in these dimensions was proved in \cite[Theorem 1]{MR5009036} under a finite $2+\delta$ moment assumption. Under the non-singularity assumption, the following result requires no moment condition.

\begin{proposition}
\label{Strandabove3}
Let $S$ be a generalized SRRW on $\RR^d$ with reinforcement parameter $\alpha\in[0,1)$ and a non-singular step distribution $\mu$. Then for every $r>0$ there is a constant $C=C(r,\mu,\alpha)>0$ such that
\[
\sup_{x\in\RR^d}\PP(S_n\in B(x,r))\leq Cn^{-d/2},\qquad n\geq 1,
\]
where $B(x,r)$ is the open Euclidean ball of radius $r$ centered at $x$. In particular, $S$ is transient if $d\geq 3$.
\end{proposition}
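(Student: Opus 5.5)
We will use the forest (genealogical) representation of the walk. Call an index $k$ a \emph{root} if $k=1$ or $\xi_k=0$. Following the pointers $u_n$ backwards from each $n$ we reach a unique root $\rho(n)$, and $X_n=\Phi_n(g_{\rho(n)})$, where $\Phi_n$ is the composition of the transformations $T_m$ along the path from $\rho(n)$ to $n$. Let $\mathcal G$ be the $\sigma$-field generated by $(\xi_n)$, $(u_n)$ and $(T_n)$. By the independence assumptions (or by Remark~\ref{remforestdependent}), the fresh steps $(g_n)$ are i.i.d.\ with law $\mu$ and independent of $\mathcal G$.

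The key observation is the following. Let $L_n=\{k\le n: k \text{ is a root and no } m\in(k,n] \text{ has } \rho(m)=k\}$ be the set of roots up to time $n$ that have no descendants by time $n$. Conditional on $\mathcal G$, we can write $S_n=\sum_{k\in L_n} g_k + Y$, where $Y$ is a function of $\mathcal G$ and of $(g_j)_{j\notin L_n}$. In particular, $Y$ is independent of $(g_k)_{k\in L_n}$ given $\mathcal G$. Since $\RR^d$ is abelian, the order of the summands does not matter. Hence
\[
\sup_x\PP(S_n\in B(x,r)\mid\mathcal G)\le \sup_x \PP\Bigl(\sum_{i=1}^{N} g_i'\in B(x,r)\Bigr)\Big|_{N=|L_n|},
\]
where $g_i'$ are i.i.d.\ with law $\mu$. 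We will then need two ingredients.

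\textbf{Step 1 (concentration for i.i.d.\ sums with no moments).} For a non-singular $\mu$ on $\RR^d$ we want $Q_N(r):=\sup_x\PP(\sum_{i\le N}g_i'\in B(x,r))\le C N^{-d/2}$. We will prove this with the Kolmogorov--Rogozin / Esseen concentration inequality. One first bounds $Q_N(r)\le C_r\int_{\|t\|\le 1/r}|\hat\mu(t)|^N\,dt$. Then, using the symmetrization $\nu=\mu*\tilde\mu$, we get $|\hat\mu(t)|^2=\hat\nu(t)=\int\cos\langle t,y\rangle\,\nu(dy)$. Non-singularity of $\mu$ means that $\nu$ is not supported on a proper linear subspace. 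So we can pick a compact piece of $\operatorname{supp}\nu$ that spans $\RR^d$, which gives $1-\hat\nu(t)\ge c\min(\|t\|^2,1)$ on a neighbourhood of $0$. No moments are needed here, because we truncate $\nu$ to a bounded set before expanding the cosine. If $\mu$ is lattice-like, $|\hat\mu(t)|$ may equal $1$ away from $0$. To avoid this we only integrate over the ball $\|t\|\le \delta$ (Esseen's inequality permits choosing the window size $\delta$ proportional to $1/r$), and we cover $B(x,r)$ by $O((r/r')^d)$ balls of a small radius $r'$. This gives $Q_N(r)\le C\int_{\|t\|\le\delta}e^{-cN\|t\|^2}dt\le CN^{-d/2}$.

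\textbf{Step 2 (many leaves).} Next we show that $|L_n|\ge cn$ with probability $1-O(n^{-d/2})$; in fact we aim for exponentially small failure probability. Each index $k\in(n/2,n]$ is a root with probability $1-\alpha>0$, independently. Such a root stays childless up to time $n$ with probability $\prod_{m=k+1}^n(1-\alpha/(m-1))\ge c(\alpha)>0$. So $\EE|L_n|\ge c n$. For concentration we will use a martingale / bounded-differences argument on the independent variables $(\xi_m,u_m)_{m\le n}$. Changing a single $(\xi_m,u_m)$ changes $|L_n|$ by at most $2$, so McDiarmid's inequality gives $\PP(|L_n|<cn/2)\le e^{-c'n}$. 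Combining the two steps,
\[
\PP(S_n\in B(x,r))\le \EE[Q_{|L_n|}(r)]\le C(cn/2)^{-d/2}+e^{-c'n},
\]
uniformly in $x$.

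\textbf{Transience.} For $d\ge3$, fix $R$ and cover $B(0,R)$ by finitely many balls of radius $1$. Then $\sum_n\PP(\|S_n\|<R)<\infty$, so by Borel--Cantelli $\|S_n\|\ge R$ eventually almost surely for every $R$, i.e.\ $\|S_n\|\to\infty$.

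I expect the main obstacle to be Step 1 without moment assumptions, in particular handling possibly lattice or degenerate-looking components of $\mu$ while keeping the constant uniform in the centre $x$. Truncating the symmetrized measure before the Fourier estimate should handle this.
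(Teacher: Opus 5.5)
Your proposal is correct and follows the paper's proof: your $L_n$ is exactly the set $\mathscr{I}_n$ of isolated vertices, and the paper likewise conditions on the forest, the transformations and the non-isolated labels, applies Esseen's concentration bound for non-singular $\mu$ (which you re-derive rather than cite), uses the linear lower bound (\ref{upbdprobisolinear}) on $I(n)$ (which you reprove by a first-moment count plus McDiarmid's inequality with bounded differences $2$), and concludes by Borel--Cantelli. The only thing to tidy is the radius bookkeeping in Step 1: keeping the Fourier window inside $\|t\|\leq\delta$ needs a radius of order at least $1/\delta$. So you should pass to radius $\max(r,1/\delta)$ using monotonicity of $Q_N$ in the radius (or cover $B(x,r)$ by balls of radius $\asymp 1/\delta$), not to balls of small radius.
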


For a discrete group $G$, we say that $\mu$ is a class function if it is constant on conjugacy classes of $G$, i.e., 
$$ \mu(y^{-1}\cdot x\cdot y)=\mu(x), \quad  \forall x,y \in G. $$
Equivalently, $\mu( x\cdot y)=\mu(y\cdot x)$ for all $x,y \in G$. Note that if $G$ is abelian, then every probability measure on $G$ is a class function. If $\mu$ is a class function, Proposition \ref{propclassbd} shows that transition probabilities can be upper bounded by studying the non-reinforced chain $(\alpha=0)$.

\begin{proposition}
\label{propclassbd}
    Let $S$ be a generalized SRRW on a countably infinite group $G$ with parameter $\alpha \in [0,1)$ and step distribution $\mu$ being a class function. We denote its law by $\PP^{(\alpha)}$ to indicate the dependence on $\alpha$. Let $\varepsilon \in (0,1)$ and $m \in \NN_+$ be such that 
$$
\max_{x\in G} \PP^{(0)}(S_n=x) \leq \varepsilon, \quad \forall  n\geq m.
$$
Then
$$
\max_{x\in G} \PP^{(\alpha)}(S_n=x) \leq \varepsilon + 2e^{-(1-\alpha)n/18}, \quad \forall  n\geq \frac{8 m}{1-\alpha}.
$$
\end{proposition}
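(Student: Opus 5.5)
Let the forest representation be as follows. Each index $n$ with $\xi_n=0$, together with $n=1$, is a root and carries a fresh step $g_n$. Each index with $\xi_n=1$ is attached to its parent $u_n$. Conditionally on the forest variables $(\xi_n,u_n)_{n\ge 2}$ and the transformations $(T_n)$, every step $X_k$ is a deterministic measurable function of the fresh step of its root. Let $R_n=\{1\}\cup\{2\le k\le n:\xi_k=0\}$ be the set of roots up to time $n$. The fresh steps $(g_k)_{k\in R_n}$ are i.i.d.\ with law $\mu$, independent of $\mathcal G:=\sigma\bigl((\xi_k,u_k)_{k\ge2},(T_k)_{k\ge2}\bigr)$. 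By Remark \ref{remforestdependent}, this also covers transformations that depend on the past forest.

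The key step is to condition on $\mathcal G$ and to use only the root positions. Write $S_n$ as an ordered product over $k\le n$. For each root $k\in R_n$, the factor $X_k=g_k$ appears in its own slot, while all other factors are functions of the fresh steps of other roots. Now condition in addition on $(g_j)_{j\in R_n\setminus\{k\}}$. Then $S_n=a\,g_k\,b$ with $a,b$ measurable with respect to the other roots. Since $\mu$ is a class function, the map $g_k\mapsto b\,a\,g_k$ after cyclic conjugation shows that $a g_k b$ is distributed like $y\cdot(\text{conjugate of } g_k)$, and the conjugate of $g_k$ again has law $\mu$. I want to iterate this over all roots. Listing the roots $k_1<\dots<k_N$, I write
\[
S_n=w_0\,g_{k_1}\,w_1\,g_{k_2}\cdots g_{k_N}\,w_N ,
\]
where each $w_i$ depends on the fresh steps of the roots. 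The difficulty is that the $w_i$ depend on the $g_{k_j}$ themselves, since descendants copy them. To avoid this, I condition instead on all roots except a carefully chosen set $J$ of roots that have \emph{no descendants up to time $n$}, that is, leaves among the roots. For $j\in J$ the step $g_{k_j}$ appears exactly once and nothing else depends on it. Then, conditionally, $S_n=w_0 h_1 w_1 h_2\cdots h_{|J|} w_{|J|}$, where the $h_i$ are i.i.d.\ with law $\mu$, independent of the fixed $w_i$. Using the class-function property repeatedly (moving each $w_i$ through: $w h = (w h w^{-1}) w$, and $w h w^{-1}$ has law $\mu$), this product has the law of $h'_1\cdots h'_{|J|}\cdot c$ for a fixed $c$, with $h'_i$ i.i.d.\ $\mu$. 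The $h'_i=w_{<i}h_iw_{<i}^{-1}$ are independent, because each $w_{<i}$ is deterministic given the conditioning. Hence the conditional probability of $\{S_n=x\}$ is at most $\max_y \PP^{(0)}(S_{|J|}=y)\le\varepsilon$ whenever $|J|\ge m$.

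It remains to show that $\PP(|J_n|<m)\le 2e^{-(1-\alpha)n/18}$ for $n\ge 8m/(1-\alpha)$, where $J_n$ is the set of childless roots among times $\le n$. I restrict to roots in the second half, $k\in(n/2,n]$. The count of such roots is Binomial$(\lceil n/2\rceil,1-\alpha)$, with mean about $(1-\alpha)n/2$. By a Chernoff bound, it is at least $(1-\alpha)n/4$ except with probability $e^{-(1-\alpha)n/16}$. A root $k>n/2$ receives a child at time $j\in(k,n]$ with probability at most $\alpha/(j-1)$, so its expected number of children is at most $\log 2+o(1)<1$. I would rather use a cleaner device. Each later time $j$ chooses only one parent, so at most $n/2$ of these roots get children. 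That alone does not suffice, so instead I count pairs: among roots in $(n/2,n]$, the number that receive children is at most the number of times $j\in(n/2,n]$ with $\xi_j=1$ and $u_j>n/2$ and $u_j$ a root. Bounding this needs care. I expect this to be the main obstacle: obtaining the explicit constant $18$ and the threshold $8m/(1-\alpha)$. My first attempt will be to take roots in $(3n/4,n]$ or a similar window, where each has a childless probability of at least $1-\log(4/3)>1/2$ independently of the root indicators. The $u_j$ are independent of the $\xi$'s, so, conditional on the root set, the childless events are bounded by negatively associated occupancy indicators. Chernoff then gives at least $(1-\alpha)n/8\ge m$ childless roots, with total failure probability at most $2e^{-(1-\alpha)n/18}$ after adjusting constants.
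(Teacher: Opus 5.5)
Your conditioning argument is correct and is the paper's argument in random-variable form. Conditioning on the forest, the transformations and the labels of all vertices outside $J$ is conditioning on $\mathcal K_n$. Your conjugation $wh=(whw^{-1})w$ is the commutation $P_\mu P^{(g)}=P^{(g)}P_\mu$, which leaves the conditional law $\mu^{*|J|}*\delta_c$ and hence the bound $\varepsilon$ once $|J|\geq m$.

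The gap is the final tail estimate, which you do not prove. A root with no retained child up to time $n$ is exactly an isolated vertex of $\F_n$, so your $J_n$ is $\mathscr I_n$ and $|J_n|=I(n)$. What you need is therefore precisely (\ref{upbdprobisolinear}), $\PP(I(n)\leq(1-\alpha)n/8)\leq 2e^{-(1-\alpha)n/18}$. For $n\geq 8m/(1-\alpha)$ one has $m\leq(1-\alpha)n/8$, so $\PP(I(n)<m)\leq 2e^{-(1-\alpha)n/18}$, and that is all the paper does. Both the threshold $8m/(1-\alpha)$ and the constant $18$ come from that estimate.

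The substitute you sketch cannot produce these constants. The number of roots in $(3n/4,n]$ is $\operatorname{Bin}(\approx n/4,1-\alpha)$. For fixed $\alpha$ close to $1$, the probability that it is at most $(1-\alpha)n/8$, half its mean, decays at exponential rate about $(1-\log 2)(1-\alpha)n/8\approx 0.038(1-\alpha)n$. That is slower than $(1-\alpha)n/18$. So even if every root in that window were childless, you would not reach $2e^{-(1-\alpha)n/18}$, and ``adjusting constants'' only proves a weaker proposition.

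The proof of (\ref{upbdprobisolinear}) avoids both the windowing and the coupling between root indicators and childlessness. It counts leaves of the underlying random recursive tree instead of roots. Leafness depends only on the $u_j$, which are independent of the $\xi_j$, and every leaf whose parent edge is deleted is isolated. Hence, given the tree, $I(n)$ dominates a $\operatorname{Bin}(V_n,1-\alpha)$ variable, where $V_n$ is the number of leaves. A lower-tail bound for $V_n$ plus a Chernoff bound then gives the constant $18$.
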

\begin{example}
Let $\mathcal{S}_3$ be the symmetric group on three elements, and let $G$ be the direct product of $\mathcal{S}_3$ and $(\ZZ,+)$. Assume $\mu$ is the uniform distribution on $\Gamma=\{((12),0),((13),0),((23),0)\} \cup \{(\text{Id},1),(\text{Id},-1)\}$. Let $S$ be as in Proposition \ref{propclassbd}. Since its projection to $\ZZ$ is a delayed version of a simple random walk on $\ZZ$, one has 
$$
\max_{x\in G} \PP^{(0)}(S_n=x)  \leq Cn^{-\frac{1}{2}}, \quad n\geq 1,
$$
for some universal constant $C$. Proposition \ref{propclassbd} then shows that $\max_{x\in G} \PP^{(\alpha)}(S_n=x) \leq C_1 n^{-\frac{1}{2}}$ for some positive constant $C_1=C_1(\alpha)$.
\end{example}

In the study of Markov chains, it is often assumed that the chain is lazy so that at each time step, the walker remains at his/her current position with positive probability. The following Theorem \ref{lazyGinf} shows that the transition probabilities for the reinforced version of lazy chains can be upper bounded via the so-called isoperimetric profile. For two subsets $A,B$ of a discrete group $G$, we write 
\begin{equation}
    \label{PmuABdef}
    P_{\mu}(A,B):=\sum_{x\in A,y\in B} P_{\mu}(x,y), \quad \text{where } P_{\mu}(x,y):=\mu(x^{-1}\cdot y).
\end{equation}
Following \cite{MR3726904}, for a non-empty finite subset $A \subset G$, we call $\Phi(A):=P_{\mu}(A,A^c)/|A|$ the bottleneck ratio of $A$. When $G$ is infinite, we define the isoperimetric profile $\Phi(r)$ for $r\geq 1$ by 
\begin{equation}
    \label{defphirGinf}
    \Phi(r):=\inf \{\Phi(A): 0<|A| \leq r\},\quad r \geq 1.
\end{equation}
We say that a generalized SRRW with step distribution $\mu$ is irreducible if $P_{\mu}$ given in (\ref{PmuABdef}) is irreducible.

\begin{theorem}[Lazy chains]
    \label{lazyGinf}
   Let $S=(S_n)_{n\in \NN}$ be an irreducible generalized SRRW on a countably infinite group $G$ with parameter $\alpha \in [0,1)$ and step distribution $\mu$ such that $\mu(e_G)\geq \mu_0$ for some $\mu_0\in (0,1/2]$. Then for any $\varepsilon \in (0,1)$, one has
$$
 \max_{x \in G} \PP(S_n=x) \leq \varepsilon \quad \text{if } n \geq \frac{C(\mu_0)}{1-\alpha} \int_{4}^{8 / \varepsilon}\frac{1 }{u \Phi^2(u)} d u, 
$$
  where $C(\mu_0)$ is a positive constant that depends only on $\mu_0$.
\end{theorem}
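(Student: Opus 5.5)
We plan to reduce to a random walk with independent fresh increments by conditioning on the reinforcement structure. Condition on the forest $\mathcal F$ generated by $(\xi_n,u_n)_{n\ge2}$ and on the transformations $(T_n)$. Each step $X_n$ is then a deterministic measurable function of the fresh step $g_{r(n)}$ at the root $r(n)$ of the tree containing $n$. Let $R_n=\{k\le n: k=1 \text{ or } \xi_k=0\}$ be the set of roots. Since $\alpha<1$, a Chernoff bound gives $|R_n|\ge (1-\alpha)n/2$ except on an event of probability $e^{-c(1-\alpha)n}$.

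Roots whose tree meets $\{1,\dots,n\}$ only at the root itself, i.e.\ the singleton roots $k$ with no later $j\le n$ satisfying $u_j=k$ and $\xi_j=1$, contribute a factor $X_k=g_k$ that enters $S_n$ untouched. We show that the number $N_n$ of such singletons is at least $c(1-\alpha)n$ with overwhelming probability. For this, use the roots $k\in(n/2,n]$: each such root is never copied with probability $\prod_{j=k+1}^{n}(1-\alpha/(j-1))\ge c_\alpha>0$ (of order $(k/n)^\alpha\geq 2^{-\alpha}$). Concentration for these events follows from a martingale or bounded-differences argument in the variables $(\xi_j,u_j)$, since changing one $u_j$ alters $N_n$ by at most $2$. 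This yields the first main step:
\[
\PP\bigl(N_n< c_1(1-\alpha) n\bigr)\le 2e^{-c_2(1-\alpha)n}.
\]
If the exponential error term threatens to exceed $\varepsilon$, note that the integral already forces $n\gtrsim \log(1/\varepsilon)/(1-\alpha)$, because $\Phi\le1$ gives $\int_4^{8/\varepsilon}\frac{du}{u\Phi^2(u)}\ge\log(2/\varepsilon)$. Choosing $C(\mu_0)$ large therefore makes the error at most $\varepsilon/2$.

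Conditioned on $\mathcal F$, $(T_n)$, and all non-singleton fresh steps, we write $S_n=h_0\, g_{k_1} h_1\, g_{k_2}\cdots g_{k_N} h_N$, where the $h_i$ are fixed group elements and the $g_{k_i}$ are i.i.d.\ with law $\mu$. The second main step is the bound
\[
\max_x \PP\bigl(h_0 g_{k_1}h_1\cdots g_{k_N}h_N=x\bigr)\le \sup_{y}\max_x P_\mu^{N}(y,x)\quad\text{(or a comparable quantity)},
\]
that is, a bound for inhomogeneous products with interleaved deterministic shifts. The inequality $\|\nu*\delta_h\|_2=\|\nu\|_2$ holds because right translation is an isometry of $\ell^2(G)$. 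Writing $K=P_\mu$ as convolution by $\mu$, the probability of hitting $x$ is at most $\|\delta_{h_0}*\mu*\delta_{h_1}*\cdots\|_\infty$. To control this, we split the $N$ fresh steps into two halves and use Cauchy--Schwarz: $\max_x\PP(\cdot=x)\le\|\nu_1\|_2\|\nu_2\|_2$, where $\nu_1$ and $\nu_2$ are the laws of the two half-products, with the second reflected. Each $\|\nu_i\|_2^2$ is a return-type probability. We then run the evolving-set / Nash-inequality argument of Morris--Peres, which is \cite{MR3726904}'s bound $P^t(x,y)\le\varepsilon$ for $t\ge C\int_4^{4/\varepsilon}du/(u\Phi^2(u))$ for lazy chains. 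Every step of that argument is a Dirichlet-form decrease for the $\ell^2$ norm, $\|f*\mu\|_2^2\le\|f\|_2^2-c(\mu_0)\mathcal E(f)$, where laziness $\mu(e_G)\ge\mu_0$ supplies the spectral positivity. This decrease is invariant under interleaved translations, because right translation by $h_i$ preserves both $\ell^2$ norms and level-set sizes, and hence preserves $\Phi$ of level sets. Iterating therefore gives $\|\nu_i\|_2^2\le\varepsilon$ once $N/2\ge C\int_4^{8/\varepsilon}du/(u\Phi^2(u))$. Combining this with $N\ge c_1(1-\alpha)n$ gives the theorem.

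The main obstacle is the second step: transferring the isoperimetric-profile decay of \cite{MR3726904} to products with arbitrary interleaved deterministic shifts $h_i$. The evolving-set proof is phrased for a fixed Markov kernel, so we must redo it at the level of the $\ell^2$ Nash-type inequality $\|f\|_2^2-\|fP\|_2^2\ge c\,\Phi^2(4\|f\|_1^2/\|f\|_2^2)\|f\|_2^2/4$ for nonnegative $f$. We must then check that translations preserve all the quantities entering it: $\|f\|_1$, $\|f\|_2$, and the profile of level sets. We expect this to go through because left multiplication by group elements preserves $\Phi(A)$, since $P_\mu$ is left-invariant. The orientation, left versus right, needs care, and the Cauchy--Schwarz splitting should be arranged so that all shifts act on the correct side.
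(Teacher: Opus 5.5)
Your architecture is the paper's. Conditioning on the forest, the transformations and the non-isolated labels (the $\sigma$-algebra $\mathcal K_n$) turns $S_n$ into a product of $N=I(n)$ i.i.d.\ $\mu$-steps interleaved with fixed shifts. One needs $N\gtrsim(1-\alpha)n$ off an exponentially small event, and then splits the fresh steps into two blocks joined by Cauchy--Schwarz.

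\textbf{The gap: the isolated-vertex tail.} Your derivation of this tail does not work as written. Bounded differences (Azuma/McDiarmid) with $O(1)$ differences in $O(n)$ variables, applied to a deviation $t\asymp(1-\alpha)n$, gives only $\exp(-ct^2/n)=\exp(-c(1-\alpha)^2n)$, not $2e^{-c_2(1-\alpha)n}$. This costs a factor $1/(1-\alpha)$ in the theorem. When $\Phi$ is bounded below, the integral is of order $\log(1/\varepsilon)$, so the statement asserts that $n$ of order $\log(1/\varepsilon)/(1-\alpha)$ suffices, with $C(\mu_0)$ independent of $\alpha$. An error $e^{-c(1-\alpha)^2n}$, however, falls below $\varepsilon/2$ only once $n\gtrsim\log(1/\varepsilon)/(1-\alpha)^2$.

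The remedy is to decouple the two sources of randomness:
\begin{enumerate}[(a)]
\item Condition on the parent choices $(u_j)$, and apply bounded differences only to the $\alpha$-free number $V_n$ of leaves of the random recursive tree. This gives $V_n\geq n/4$ off an event of probability $e^{-cn}$.
\item A leaf $k$ is isolated if and only if $\xi_k=0$. So, given the tree, the isolated leaves form a $\operatorname{Bin}(V_n,1-\alpha)$ count, whose Chernoff lower tail is $e^{-c(1-\alpha)n}$.
\end{enumerate}
This is how (\ref{upbdprobisolinear}) is obtained; the paper quotes it from the companion paper and re-derives it in Lemma \ref{lemFmniso}.

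\textbf{The core estimate: a different but sound route.} The paper runs the Morris--Peres evolving-set process through the inhomogeneous product. A shift only translates $W_j$, so $|W_j|$ is unchanged. It bounds $\ell^2$ norms via the Doob transform and the root profile $\psi$, converts with $\psi\geq\mu_0^2\Phi^2/(2(1-\mu_0)^2)$, and treats the second block with the reversed kernels $\bar P_j$.

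Your spectral-profile recursion also works:
\begin{enumerate}[(a)]
\item Writing $P_\mu=\mu_0I+(1-\mu_0)Q$ gives $\|f\|_2^2-\|fP_\mu\|_2^2\geq2\mu_0\mathcal E(f)$.
\item Truncating $f\geq0$ at level $\|f\|_2^2/(4\|f\|_1)$ and applying the Dirichlet Cheeger inequality gives $\mathcal E(f)\geq\frac14\Phi^2(4\|f\|_1^2/\|f\|_2^2)\|f\|_2^2$.
\item Shifts preserve $\|f\|_1$ and $\|f\|_2$, so $u=\|f\|_2^2$ obeys a scalar recursion that ignores the $h_i$ entirely.
\end{enumerate}

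Compared with evolving sets, this route:
\begin{enumerate}[(a)]
\item needs the truncation and Cheeger lemmas but no Doob transform;
\item makes the reversal unnecessary, since all kernels are left-invariant and so $\sum_zP_{m,n}(z,y)^2=\sum_wP_{m,n}(e_G,w)^2$ (your group Cauchy--Schwarz exploits exactly this);
\item gives $C(\mu_0)$ of order $1/\mu_0$ rather than $(1-\mu_0)^2/\mu_0^2$.
\end{enumerate}

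\textbf{Two incorrect remarks, not used in the argument.}
\begin{enumerate}[(a)]
\item Right translation does not preserve $\Phi(A)$, because $P_\mu$ is only left-invariant unless $\mu$ is a class function. Only set sizes and norms need to be preserved, since $\Phi(r)$ is a global infimum.
\item The displayed comparison with $\sup_y\max_xP_\mu^N(y,x)$ is false in general, because a shift can merge paths. Take $\mu=\mu_0\delta_{e_G}+\frac{1-\mu_0}{2}(\delta_a+\delta_b)$ on a free group, $N=2$, $h_0=h_2=e_G$ and $h_1=a^{-1}$. Then $b$ receives mass $(1-\mu_0)^2/2$, which exceeds $\max_z\mu^{*2}(z)$ when $\mu_0<1/3$.
\end{enumerate}
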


When $G$ is countable, let 
$$
\Gamma:=\{x\in G: \mu(x)>0\}
$$
be the support of $\mu$. If $\Gamma$ is finite and generates $G$, we use the undirected Cayley graph $G_{\Gamma}$ associated with $\Gamma\cup\Gamma^{-1}$ to describe the geometry of $G$. The generalized SRRW itself need not be nearest-neighbor unless $T_n(\Gamma)\subseteq\Gamma$ almost surely for every $n$. For a discrete group $G$, we say that $S$ is transient if it almost surely eventually leaves every finite subset of $G$. 

\begin{corollary}
     \label{corlazytran}
     Let $S$, $G$ and $\mu$ be as in Theorem \ref{lazyGinf} and assume that $\mu$ has finite support $\Gamma$ which generates $G$. \\
    (i).  If $G_{\Gamma}$ is of polynomial growth
$d\geq 1$ (e.g., $\ZZ^d$), then there exists a positive constant $C_1=C_1(G,\mu, \alpha)$ such that
    $$
\max_{x \in G} \PP(S_n=x) \leq C_1 n^{-\frac{d}{2}}, \quad \forall n\geq 1,
    $$
    In particular, if $G_{\Gamma}$ is of at least cubic growth, then $S$ is transient. \\
    (ii). If $G_{\Gamma}$ is of exponential growth (e.g., the lamplighter group over $\ZZ$), then  there exist positive constants $C_2=C_2(G,\mu, \alpha)$ and $C_3=C_3(G,\mu, \alpha)$ such that
 $$
\max_{x \in G} \PP(S_n=x) \leq C_2 e^{-C_3n^{1/3}}, \quad \forall n\geq 1,
    $$
    (iii). If $G_{\Gamma}$ is nonamenable, that is, 
    $$
  \inf\left\{\frac{|\{(x,y) \in E(G_{\Gamma}): x \in A, y \in A^c\}|}{|A|}: A\subset G,\ 0<|A|<\infty\right\} >0,
    $$
    then there exist positive constants $C_4=C_4(G,\mu, \alpha)$ and $C_5=C_5(G,\mu, \alpha)$ such that
    \begin{equation}
        \label{eqexpdecaytranpro}
        \max_{x \in G} \PP(S_n=x) \leq  C_4 e^{-C_5 n},  \quad \forall n\geq 1.
    \end{equation}
\end{corollary}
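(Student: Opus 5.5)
The plan is to feed lower bounds on the isoperimetric profile $\Phi(r)$ into Theorem \ref{lazyGinf} and invert the resulting integral condition.

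\textbf{Comparison of profiles.} Since $\Gamma$ is finite and generates $G$, set $c_\mu:=\min_{y\in\Gamma}\mu(y)>0$. Every edge of $G_\Gamma$ from $x\in A$ to $y\in A^c$ has $x^{-1}y\in\Gamma\cup\Gamma^{-1}$. If $x^{-1}y\in\Gamma$, then $P_\mu(x,y)\ge c_\mu$. If instead $y^{-1}x\in\Gamma$, the edge contributes $P_\mu(y,x)$ to $P_\mu(A^c,A)$. Because $P_\mu$ is doubly stochastic ($\mu$-random walk on a group), we have $P_\mu(A,A^c)=P_\mu(A^c,A)$ for finite $A$. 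Each vertex has at most $|\Gamma\cup\Gamma^{-1}|$ edges of each type, so
\[
\Phi(A)\ge \frac{c_\mu}{2|\Gamma\cup\Gamma^{-1}|}\,\frac{|\partial_E A|}{|A|}.
\]
Hence $\Phi(r)\ge c\,\Phi_{G_\Gamma}(r)$, where $\Phi_{G_\Gamma}$ is the edge-isoperimetric profile of the Cayley graph.

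\textbf{Geometric input.} I use the classical isoperimetric inequalities of Coulhon and Saloff-Coste: $|\partial A|/|A|\ge c/V^{-1}(2|A|)$, where $V$ is the volume growth function.
\begin{itemize}
\item Polynomial growth of degree $d$ gives $\Phi(u)\ge c\,u^{-1/d}$.
\item Exponential growth gives $V^{-1}(u)\le C\log u$, so $\Phi(u)\ge c/\log u$.
\item In the nonamenable case, $\Phi(u)\ge c>0$ directly by hypothesis.
\end{itemize}

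\textbf{Integration.} Plug these bounds into Theorem \ref{lazyGinf}.
\begin{itemize}
\item In case (i), $\int_4^{8/\varepsilon}u^{2/d-1}\,du\le C\varepsilon^{-2/d}$. So $\max_x\PP(S_n=x)\le\varepsilon$ once $n\ge C'\varepsilon^{-2/d}$. Choosing $\varepsilon=(C'/n)^{d/2}$ gives $C_1n^{-d/2}$ for large $n$; small $n$ are absorbed by enlarging $C_1$, since probabilities are at most $1$.
\item In case (ii), $\int_4^{8/\varepsilon}(\log u)^2u^{-1}\,du\le C(\log(8/\varepsilon))^3$. This yields $\varepsilon= 8\exp(-(n/C')^{1/3})$.
\item In case (iii), the integral is $\le C\log(2/\varepsilon)$. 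This gives $\varepsilon=2e^{-n/C'}$.
\end{itemize}
In each case the constants depend on $\alpha$ only through the factor $(1-\alpha)^{-1}$.

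\textbf{Transience.} For at least cubic growth, apply (i) with $d\ge3$. Growth at least cubic means $V(n)\ge cn^3$, and the Coulhon--Saloff-Coste bound then still gives $\Phi(u)\ge cu^{-1/3}$, hence decay $Cn^{-3/2}$. The same bound holds even when the growth is not exactly polynomial, e.g.\ exponential, where (ii) is stronger. For a fixed finite $F$,
\[
\sum_n\PP(S_n\in F)\le |F|\sum_n Cn^{-3/2}<\infty,
\]
so by Borel--Cantelli $S$ visits $F$ only finitely often almost surely, i.e.\ $S$ is transient.

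\textbf{Main obstacle.} I expect the main obstacle to be the careful comparison between $P_\mu$-boundary and graph boundary when $\Gamma$ is not symmetric. Double stochasticity handles this, but it must be checked that the right-multiplication convention $P_\mu(x,y)=\mu(x^{-1}y)$ matches the Cayley graph edges. The second obstacle is quoting the isoperimetric inequality in the correct form with $V^{-1}$.
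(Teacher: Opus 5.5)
Your proposal is correct and follows essentially the same route as the paper: bound $\Phi(A)$ below by the edge boundary of $G_\Gamma$, apply the Coulhon--Saloff-Coste inequality, and invert the integral condition of Theorem \ref{lazyGinf} in each growth regime, with transience for at least cubic growth following from summability of $n^{-3/2}$ and Borel--Cantelli. The only cosmetic difference is that the paper phrases the comparison through $\mu_s=(\mu+\check\mu)/2$, which rests on the same double-stochasticity identity you use and avoids your unnecessary but harmless extra factor $1/|\Gamma\cup\Gamma^{-1}|$.
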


Corollary \ref{nonamentran} below shows that the exponential decay in (\ref{eqexpdecaytranpro}) also holds when the assumption $\mu(e_G)>0$ is replaced by the symmetry of $\Gamma$. This resolves an open question posed by Mukherjee (see \cite[Open problem 2.2]{mukherjee2025elephant}) that the $n$-step return probability of the ERW on a Cayley tree decays exponentially fast in $n$ (recall the definition of ERW on Cayley graphs from Section \ref{secdef}).

 \begin{corollary}
    \label{nonamentran}
   Suppose $G$ is a group generated by a finite symmetric set $\Gamma$ such that $G_{\Gamma}$ is nonamenable. \\
   (i) Let $S$ be an irreducible generalized SRRW on $G$ with parameter $\alpha \in [0,1)$ and step distribution $\mu$ whose support is $\Gamma$. Then, there exists a constant $\kappa=\kappa(G,\mu,\alpha) \in (0,1)$ such that 
    \begin{equation}
        \label{returnprobexpon}
         \sup_{x\in G} \PP(S_n=x) \leq \kappa^n, \quad \forall n\geq 1.
    \end{equation}
    In particular, $\liminf_n d(e_G,S_n)/n >0$ a.s. where $d(\cdot,\cdot)$ denotes the graph distance in $G_{\Gamma}$. \\
    (ii). Let $S$ be an ERW on $G_{\Gamma}$ with memory parameter $p\in[0,1)$. Then there exists a constant $\rho=\rho(G,\Gamma,p)\in(0,1)$ such that
    $$
\sup_{x\in G} \PP(S_n=x) \leq \rho^n, \quad \forall n\geq 1.
    $$
    Moreover, $\liminf_n d(e_G,S_n)/n>0$ almost surely.
\end{corollary}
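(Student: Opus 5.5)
The plan is to reduce both parts to the lazy case in Corollary \ref{corlazytran}(iii) by a parity/two-step trick, or more directly to redo the forest-representation argument behind Theorem \ref{lazyGinf} with the even-step kernel $\mu*\mu$ in place of $\mu$.

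\textbf{Part (i).} Since $\Gamma$ is symmetric and contains $x$ and $x^{-1}$, the measure $\nu:=\mu*\mu$ satisfies $\nu(e_G)\geq \sum_{x\in\Gamma}\mu(x)\mu(x^{-1})>0$. Its support $\Gamma^2$ generates a finite-index subgroup $H$ (of index at most $2$). The Cayley graph of $H$ with respect to $\Gamma^2$ is quasi-isometric to $G_\Gamma$, so it is nonamenable, and hence $P_\nu$ has a positive isoperimetric profile $\Phi(r)\geq c>0$ on each coset.

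The key step is to write the generalized SRRW, conditionally on the forest $(\xi_n,u_n)$ and the transformations, as a product in which the fresh steps $g_{i}$ of roots of the forest appear as independent $\mu$-distributed factors interleaved with deterministic functions of them. Following the proof of Theorem \ref{lazyGinf}, I will pair up roots whose trees are singletons among times $\le n$, i.e.\ fresh steps never copied. A standard concentration estimate gives at least $c(1-\alpha)n$ of them with probability at least $1-e^{-c'n}$. Adjacent pairs of such untouched fresh steps, or more generally pairs of singleton roots separated only by a word independent of them, supply convolution factors of the form $g_i\, w\, g_j$. After conditioning on everything else, these behave like steps of law $\mu$ conjugated by fixed elements.

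I then intend to apply the bottleneck/Nash-type bound from Theorem \ref{lazyGinf}: conditioned on $k$ independent factors each containing a lazy-like convolution, the maximal point mass decays like $(1-c\Phi(\infty)^2)^{k}$. This yields $\sup_x\PP(S_n=x)\le e^{-c'n}+\kappa_0^{c(1-\alpha)n}\le\kappa^n$.

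The main obstacle is handling conjugation. The operator for a step "$w g$" with $w$ fixed is a translate of $P_\mu$, and since $\ell^2$ norms are translation invariant, the spectral-radius bound $\|P_\mu\|_{\ell^2\to\ell^2}<1$ survives. I will argue via Kesten's theorem: nonamenability with symmetric support gives $\|P_{\check\mu}P_\mu\|<1$. Using operator norms on $\ell^2(G)$, each untouched fresh step contributes a factor of at most $\|P_\mu\|=\rho_\mu<1$ to $\|\delta\|_2$, while every other factor (deterministic or copied steps) is a contraction. The conditional point mass is therefore at most $\rho_\mu^{K}$, where $K$ is the number of untouched fresh steps. Combined with the large-deviation bound $K\geq c(1-\alpha)n$, this gives (\ref{returnprobexpon}) directly and bypasses laziness. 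The subtlety is ensuring that copied steps $T_m(X_{u_m})$ do not reuse untouched roots, which holds by definition of untouched. The linear-speed claim then follows because balls of radius $\delta n$ have size at most $|\Gamma|^{\delta n}$, so $\PP(d(e_G,S_n)\le\delta n)\le |\Gamma|^{\delta n}\kappa^n$ is summable for small $\delta$, and Borel--Cantelli finishes.

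\textbf{Part (ii).} By Lemma \ref{ERWgSRRW}, the ERW is a generalized SRRW with some $\alpha<1$ and $\mu$ uniform on $\Gamma$. It is irreducible since $\Gamma$ generates $G$. So (ii) follows from (i). I would need to check that the representation indeed has $\alpha<1$ whenever $p<1$; this should come from writing the non-repeat option as a fresh uniform step with positive probability, which is possible when $|\Gamma|\ge2$.
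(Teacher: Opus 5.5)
Your reduction of (ii) to (i) fails at $p=0$, and the statement explicitly includes $p=0$. Lemma \ref{ERWgSRRW} gives $\alpha<1$ only for $p\in(0,1)$; at $p=0$ its representation has $\alpha=1$. In fact no representation with $\alpha<1$ exists at $p=0$. Since $X_1=g_1$, $\mu$ must be uniform on $\Gamma$, so any generalized SRRW with $\alpha<1$ has $\PP(X_2=X_1)\geq(1-\alpha)\PP(g_2=g_1)=(1-\alpha)/r>0$. The ERW with $p=0$, by contrast, never repeats the selected step. The check you postpone, splitting a positive multiple of the uniform law off the non-repeat kernel, is exactly what fails, because $\mathrm{Unif}(\Gamma\setminus\{s_i\})$ gives no mass to $s_i$. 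With $\alpha=1$, part (i) says nothing, so a different argument is needed.

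The paper supplies it by working directly with the direction counts. It uses $\PP(X_{k+1}=s_i\mid\mathcal H_k)=1/r+\beta\Delta_k(i)/k$ with $|\beta|\leq1$. Lemma \ref{lemERWcounts} restarts the linear recursion for $\Delta_k/k$ at a small fraction of $n$ and applies Azuma--Hoeffding. This shows $\|\Delta_k\|/k\leq\eta/r$ for all $m:=\lfloor n/2\rfloor\leq k<n$, outside an event of probability $Ce^{-cn}$. On that event every conditional step probability in the second half is at most $(1+\eta)/r$. Hence each path probability is at most $(1+\eta)^{n-m}$ times the simple-random-walk one, and $\PP(S_n=x,E_n\mid\mathcal H_m)\leq((1+\eta)\|P_\nu\|)^{n-m}$. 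Choosing $\eta$ with $(1+\eta)\|P_\nu\|<1$, which Kesten's theorem allows, gives exponential decay. For $p\in(0,1)$ your reduction is fine, but without an argument like this the case $p=0$ remains unproved.

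For part (i), the argument you finally settle on is the paper's proof. You condition on the forest, the transformations and the non-isolated labels, note that translations are $\ell^2$-isometries so the conditional point mass is at most $\|P_\mu\|^{I(n)}$, and then use (\ref{upbdprobisolinear}). The opening detour through $\mu*\check\mu$, pairing of roots and laziness can simply be dropped. Your derivation of $\|P_\mu\|<1$ is valid: you use $\|P_\mu\|^2=\|P_\mu P_{\check\mu}\|$ and Kesten's theorem for the symmetric measure $\mu*\check\mu$ on the subgroup of index at most $2$ generated by $\Gamma\Gamma$. The paper instead writes $\mu=b\nu+(1-b)\nu'$ with $\nu$ uniform on $\Gamma$.

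Two small repairs are needed. First, the forest bound only gives $Ce^{-cn}$. To get $\kappa^n$ for every $n\geq1$ you also need the uniform bound $\sup_x\PP(S_n=x)\leq\alpha+(1-\alpha)\max_g\mu(g)<1$, obtained by conditioning on a fresh step at time $n$. Second, a ball of radius $k$ has at most $(|\Gamma|+1)^k$ points, not $|\Gamma|^k$; this does not affect the Borel--Cantelli step.
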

\begin{remark}
\label{mukhexpdecay}
When $G_{\Gamma}$ is the infinite $d$-regular tree with $d\geq 3$, the exponential decay for $\PP(S_n=e_G)$ has been proved by Mukherjee for the case when $p<1/2$ and $(p,d)\neq (0,3)$, see \cite[Theorem 2.3]{mukherjee2025elephant}.
\end{remark}

Finally, we consider possibly the simplest non-trivial SRRW: Let $S$ be the usual SRRW on $G=(\ZZ_2, +)$ with reinforcement parameter $\alpha \in (0,1)$ and step distribution $\mu$ such that $\mu(1)= \mu(0)=1/2$. Observe that for any fixed even $n\geq 2$,  
$$
\lim_{\alpha \to 0+}\PP(S_n=0) = \frac{1}{2}, \quad \lim_{\alpha \to 1-}\PP(S_n=0) = \PP(n X_1 = 0)=1.
$$
Corollary \ref{twopointcor} below gives some estimates on the convergence rate.

 \begin{corollary}
     \label{twopointcor}
 Let $S$ be as above. Then, for any $n\geq 1$, one has $\PP(S_{2n-1}=0)=1/2$ and 
  \begin{equation}
      \label{Z2estP2n0}
      e^{-(1-\alpha)n}\alpha^n \leq 2\PP(S_{2n}=0)-1 \leq \alpha^n.
  \end{equation}
  In particular, for any $n\geq 1$,
  $$
  \lim_{\alpha\to 0+}\frac{\log (2\PP(S_{2n}=0)-1) }{\log \alpha} =n, \quad  \lim_{\alpha\to 1-}\frac{\log (1-\PP(S_{2n}=0)) }{\log (1-\alpha)} =1.
  $$
 \end{corollary}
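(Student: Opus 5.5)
The plan is to use the forest (genealogical) structure of the usual SRRW on $\ZZ_2$. Each step $X_k$ equals the fresh step $g_j$ of its root $j$, where roots are $j=1$ and the times $j$ with $\xi_j=0$. Hence $S_n=\sum_{\text{trees}} (\text{size of tree})\cdot g_{\text{root}} \bmod 2$. Conditionally on the forest, the $g$'s are i.i.d.\ uniform on $\{0,1\}$. So $S_n$ is uniform on $\ZZ_2$ as soon as some tree has odd size, and $S_n=0$ deterministically if all trees have even size. This gives
\[
2\PP(S_n=0)-1=\PP(\text{all trees at time } n \text{ have even size}) =: q_n .
\]
The same conclusion follows from the characters: $\EE[(-1)^{S_n}]=\EE\prod_{\text{trees}}\EE[(-1)^{|\tau| g}]$, and each factor is $0$ or $1$ according to parity. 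For odd $n$ the tree sizes sum to an odd number, so some tree is odd; thus $q_{2n-1}=0$ and $\PP(S_{2n-1}=0)=1/2$.

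\emph{Lower bound.} I exhibit an explicit event on which every tree has even size: for each $k=1,\dots,n$ we have $\xi_{2k}=1$ and $u_{2k}=2k-1$, while the odd times $2k-1$ with $k\geq 2$ are free to be roots or not. On this event, step $2k$ attaches to step $2k-1$, so each pair $\{2k-1,2k\}$ lies in a common tree. Moreover each odd time either starts a new tree or joins an existing tree. The stronger claim I need is that the trees are unions of the pairs $\{2k-1,2k\}$. This requires that an odd time $2k+1$ which reinforces pick $u\leq 2k$; it then lies in some earlier tree, and its partner $2k+2$ joins as well, so every tree is a union of pairs and has even size. The probability of the constraints at even times is $\prod_{k=1}^n \alpha/(2k-1)$, which is too small. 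So I refine the event: require each even time $2k$ to copy from a tree currently of odd size, i.e.\ the only odd tree, namely the one containing $2k-1$. If $2k-1$ is a root, that tree is just $\{2k-1\}$; if $2k-1$ joined an existing tree of even size, that tree now has odd size. In either case there is exactly one odd tree before time $2k$. Since every past step of even size $\geq 1$ belongs to it, its size is at least $1$, and I must lower bound $(\text{size})/(2k-1)$. Rather than tracking sizes, I would use the cleaner event in which the odd times $2k-1$ for $k\geq 2$ all reinforce, hence form a single tree. Then at time $2k$ the odd tree is everything, and step $2k$ joins it with probability $\alpha$. Odd times reinforce with probability $\alpha$ each, giving probability $\alpha^{n}\cdot\alpha^{n-1}$. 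That has the wrong exponent, so instead I allow odd times to be arbitrary. Conditional on the past, the probability that step $2k$ copies from the unique odd tree $\tau$ is $\alpha|\tau|/(2k-1)$. Averaging over odd times, $|\tau|=2k-1$ if time $2k-1$ reinforces (probability $\alpha$), and $|\tau|\geq 1$ otherwise. This gives a per-pair factor of at least $\alpha\bigl(\alpha+(1-\alpha)/(2k-1)\bigr)\geq \alpha\cdot\alpha$. To reach $\alpha e^{-(1-\alpha)}$, I would track $|\tau|$ more carefully. An alternative bound is $|\tau|/(2k-1)\geq$ the fraction of steps in the big tree, and a product such as $\prod(1-(1-\alpha)/(2k-1))$ over odd times must be bounded by $e^{-(1-\alpha)}$ per pair. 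This is the main obstacle: choose the right event so the product is $\geq (\alpha e^{-(1-\alpha)})^n$. The first thing I would try is the event where every step reinforces from a single growing tree, except that fresh steps are allowed only at odd times $2k-1$ and must then be absorbed immediately by $2k$.

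\emph{Upper bound.} I would show $q_{2k}\leq \alpha q_{2k-2}$. Given all trees are even at time $2k-2$, time $2k-1$ creates exactly one odd tree, and time $2k$ must fix it. Fixing requires $\xi_{2k}=1$ (probability $\alpha$), since a fresh root would leave two odd trees. This gives $q_{2k}\leq \alpha\, q_{2k-2}$, and by induction $q_{2n}\leq\alpha^n$. The limits then follow directly from \eqref{Z2estP2n0}.
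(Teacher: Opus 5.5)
Your reduction to $q_n:=\PP(\text{all trees of }\mathscr F_n\text{ are even})=2\PP(S_n=0)-1$ is correct, and it is exactly the paper's identity (\ref{Rn0allevenclu}). So is the odd case, and the two limits do follow routinely from (\ref{Z2estP2n0}).

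\textbf{Upper bound.} The recursion $q_{2k}\leq\alpha q_{2k-2}$ is not justified. Let $E_m$ be the event that all trees of $\mathscr F_m$ are even. Your argument only bounds $\PP(E_{2k}\cap E_{2k-2})\leq\alpha q_{2k-2}$, and $E_{2k}\not\subseteq E_{2k-2}$. Two odd trees at time $2k-2$ can both be repaired, with step $2k-1$ joining one and step $2k$ joining the other. For example, $\xi_2=0$, $\xi_3=\xi_4=1$, $u_3=1$, $u_4=2$ produce the trees $\{1,3\},\{2,4\}$, and $\PP(E_4\setminus E_2)=(1-\alpha)\alpha^2/3>0$. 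So a recursion in $q_{2k}$ alone does not close, and you must carry all parity constraints.

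Set $\mu(B):=\EE\prod_{j\in B}(-1)^{X_j}$. This is the probability that $B$ meets every tree in an even number of vertices. Since $\EE[(-1)^{X_{n+1}}\mid\mathcal H_n]=\frac{\alpha}{n}\sum_{i\leq n}(-1)^{X_i}$, you get $\mu(B\cup\{n+1\})=\frac{\alpha}{n}\sum_{i\leq n}\mu(B\triangle\{i\})$ for $B\subseteq[n]$. Induction on $\max B$ then gives $\mu(B)\leq\alpha^{|B|/2}$ for even $|B|$: of the sets on the right, $2k-1$ have size $2k-2$ and $n-2k+1$ have size $2k$. Now take $B=[2n]$. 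This is the probabilistic content of the paper's recursion (\ref{lambdarec}), with $\lambda_{n,k}=\sum_{B\subseteq[n],\,|B|=2k}\mu(B)$.

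\textbf{Lower bound.} This part is not proved, as you acknowledge, and none of the events you try can give it. For small $\alpha$, the leading contribution to $q_{2n}$ comes from forests with exactly $n$ retained edges, that is, splittings of $[2n]$ into $n$ root--child pairs. Any single pairing is far too unlikely. Consecutive pairs have probability about $\alpha^n/(2n-1)!!$, superexponentially smaller in $n$ than $(\alpha/e)^n$. The nested event $\bigcap_{k\leq n}E_{2k}$ is also of that order as $\alpha\to0$, and single-tree events cost $\alpha^{2n-1}$.

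You have to sum over all pairings, and most of them are not nested. This is the same phenomenon that breaks your upper-bound recursion. The paper does the summation through the size-averaged coefficients $\lambda_{n,k}$ of Proposition \ref{Rnsecbase}. Using (\ref{lambdarec}), Pascal's identity (\ref{lamnkupn1}) and the ratio bound (\ref{arigeomeanine}), induction over all $k$ simultaneously gives $\lambda_{n,k}\geq\bigl(\frac{2+\alpha}{3}\bigr)^{n-1}\alpha^k\binom{n}{2k}$. Hence
$q_{2n}=\lambda_{2n,n}\geq e^{-2(1-\alpha)n/(2+\alpha)}\alpha^n\geq e^{-(1-\alpha)n}\alpha^n$.

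The averaging over all $2k$-subsets is essential. Individual values $\mu(B)$ can be far below $\alpha^{|B|/2}$; for instance, $\mu(\{1,b\})\approx\alpha/(b-1)$ as $\alpha\to0$. So no per-set lower-bound induction can replace it.
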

 \begin{remark}
    In the setting of Corollary \ref{twopointcor}, for fixed $\alpha \in (0,1)$, the distribution of $S_n$ converges to the uniform distribution exponentially fast in $n$. The same holds for the usual SRRW on any finite group when $P_\mu$ is irreducible and aperiodic, see the companion paper \cite{peres2026mixing} for more details.
 \end{remark}

We close the introduction by indicating what is new here relative to the companion paper \cite{peres2026mixing}, where the forest representation and the evolving-set method were developed for the usual SRRW on finite groups. First, the estimates below are uniform over transformations $(T_n)$ that may depend on one another and on the past of the forest, and that need neither be linear nor preserve the generating set. Second, on $\RR^d$ the transition-probability bounds require no moment assumption on $\mu$, and the conditional estimates of Proposition \ref{remconditionalestimates} hold from an arbitrary past and at stopping times. Third, the exponential decay of the ERW in Corollary \ref{nonamentran}(ii) holds on every nonamenable Cayley graph and for every memory parameter $p<1$.

\section{Proof of main results}

\textbf{Notation.} For a positive integer $n$, we write $[n]:=\{1,2,\dots,n\}$. We let $C(a_1, a_2,..., a_k)$ denote a positive constant depending only on variables $a_1, a_2,..., a_k$. The actual values of these constants may vary from line to line. When $G$ is countable, we write $L^2(G)=\ell^2(G)$ for the real Hilbert space of square-summable functions $f: G \to \RR$ with respect to counting measure. Its norm and inner product are
$$
\|f\|_2^2:=\sum_{x \in G} f(x)^2 \quad \text { and } \quad\langle f, h\rangle:=\sum_{x \in G} f(x) h(x).
$$
The operator norm of a linear operator $T: L^2(G) \to  L^2(G)$ is defined by 
$$
\|T\|:=\sup _{f \in L^2(G):\|f\|_2=1}\|T f\|_2.
$$

\subsection{Forest representation and conditional estimates}

We relate the generalized SRRW to the Bernoulli percolation on a random recursive tree, as we do for the usual SRRW in \cite[Proposition 2.1]{peres2026mixing}. We note that such a connection was initially observed by K\"ursten \cite{MR3652690} in the setting of elephant random walks.

Using the random variables in Definition \ref{defSRRWG}, we construct a growing random forest $(\mathscr{F}_n)_{n\geq 1}$ and assign a $G$-valued random variable to each node: At time $n=1$, there is a vertex with label 1. We denote by $\mathscr{F}_1$ the forest with this single vertex. Later, at each time step $n\geq 2$:
\begin{enumerate}[(i)] 
  \item  We add and connect a new vertex labeled $n$ to the vertex labeled $u_{n}$ in  $\mathscr{F}_{n-1}$.
 \item  If $\xi_n=0$, the edge connecting the new vertex to the existing vertex is deleted; and if $\xi_n=1$, the edge is retained. We then get a forest with $n$ vertices, which we denote by $\mathscr{F}_n$. 
 \item In each connected component of $\mathscr{F}_n$, we designate the vertex with the smallest label as the root. For $j\in [n]$, we denote by $\CalC_{j, n}$ the cluster rooted at $j$ and denote by $\left|\CalC_{j, n}\right|$ its size, with the convention that $\CalC_{j, n}=\emptyset$ if there is no cluster rooted at $j$. To each non-empty cluster $\CalC_{j,n}$, we assign $g_j$ to the root $j$. The values at the other vertices are determined by $(u_n)_{n\geq 2}$ and $(T_n)_{n\geq 2}$ recursively: If we have assigned $g$ to some vertex $i$ and $\ell$ is joined to $i=u_{\ell}$ by a retained edge, then the value assigned to $\ell$ is $T_{\ell}(g)$.
\end{enumerate}

Note that, for any $n\geq j\geq 1$, the component $\CalC_{j, n}\neq \emptyset$ if and only if $\xi_j=0$ (with the convention that $\xi_1\equiv 0$). In particular, the root of $\CalC_{j,n}$ and the value assigned to the root of $\CalC_{j,n}$ do not change as $n$ increases. Moreover, for fixed $n\geq 2$, one can also obtain $\F_n$ as follows: Construct a random recursive tree by connecting $j$ to $u_j$ for $j= 2,3,\dots,n$; and then perform a Bernoulli percolation on the tree by deleting all edges $(j,u_j)$ with $\xi_j=0$.

The value assigned to vertex $k$ is $X_k$ $(k\geq1)$ from Definition \ref{defSRRWG}. More specifically, if the vertex $k$ belongs to $\CalC_{j,k}$, then
\begin{equation}
    \label{XkdefTrec}
    X_k:=g_j, \ \text{ if }k=j;\quad X_k:= T_k\circ T_{u_k} \circ T_{u_{u_k}} \circ \dots \circ T_{\ell}(g_j), \ \text{ if } k>j,
\end{equation}
where $(k, u_k,u_{u_k},\dots,\ell, j)$ is the unique path in $\mathscr{F}_k$ connecting $k$ and $j$. 

The following Proposition \ref{consgSRRWRRT} shows that one can obtain a generalized SRRW by multiplying those values in order, see Figure \ref{forest7} for an illustration. 

 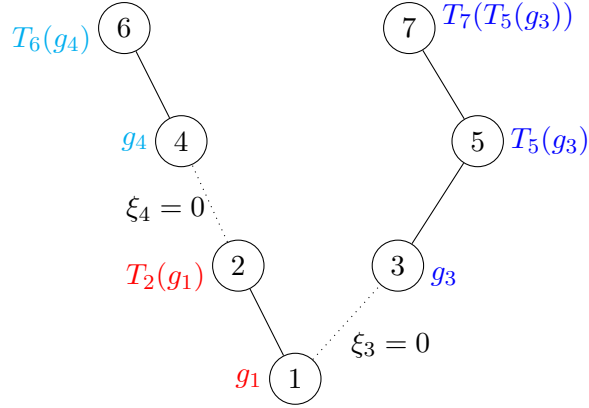
\begin{figure}
    \centering
    \begin{tikzpicture}[scale=1.5]
		\node[circle,draw, minimum size=0.5cm] (A) at  (0,0) {1};
		\node[circle,draw, minimum size=0.5cm] (B) at  (-0.5,1)  {2};
		\node[circle,draw, minimum size=0.5cm] (C) at  (0.9,1)  {3};
		\node[circle,draw, minimum size=0.5cm] (D) at  (-1,2.1)  {4};
		\node[circle,draw, minimum size=0.5cm] (E) at  (1.6,2.1)  {5};
		\node[circle,draw, minimum size=0.5cm] (F) at  (-1.5,3.1)  {6};
		\node[circle,draw, minimum size=0.5cm] (G) at  (1,3.1)  {7}; 
		\draw (A) -- (B);
		\draw[dotted] (A) -- (C);
		\draw[dotted] (B) -- (D);
		\draw (C) -- (E);
		\draw (D) -- (F);
		\draw (E) -- (G);
		\node[left] at (-0.7,1.5) {$\xi_4=0$};
		\node[right] at (0.4,0.3) {$\xi_3=0$};
        \node[left,red] at (-0.2,0) {$g_1$};
        \node[left,red] at (-0.7,0.9) {$T_2(g_1)$};
        \node[left,cyan] at (-1.2,2.1) {$g_4$};
        \node[left,cyan] at (-1.7,3) {$T_6(g_4)$};
        \node[right,blue] at (1.1,0.9) {$g_3$};
        \node[right,blue] at (1.8,2.1) {$T_5(g_3)$};
        \node[right,blue] at (1.2,3.2) {$T_7(T_5(g_3))$};
		\end{tikzpicture}
    \caption{An illustration of $S_7$ and the forest $\mathscr{F}_7$ where $u_2=u_3=1$, $u_4=2$, $u_5=3$, $u_6=4$, $u_7=5$ and $S_7=g_1\cdot T_2(g_1) \cdot g_3 \cdot g_4\cdot T_5(g_3) \cdot T_6(g_4) \cdot T_7(T_5(g_3))$.}
    \label{forest7}
\end{figure}

\begin{proposition}
\label{consgSRRWRRT}
Let $(\mathscr{F}_n)_{n\geq 1}$ and $(X_k)_{k\geq 1}$ be as defined above. Define a random walk $S=(S_n)_{n\in \NN}$ on $G$ by $S_0:=e_G$ and 
\begin{equation}
    \label{perrrtSf}
     S_n := X_1\cdot X_{2}\cdots X_{n}, \quad n\geq 1.
\end{equation}
 Then $S$ is a generalized SRRW with reinforcement parameter $\alpha$, step distribution $\mu$ and transformations $(T_n)_{n\geq 2}$.
\end{proposition}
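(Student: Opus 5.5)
The plan is to verify directly that the values $X_k$ produced by the forest construction satisfy the recursion of Definition \ref{defSRRWG}, \emph{using the very same random variables} $(g_n),(\xi_n),(u_n),(T_n)$. Then $S$ defined in (\ref{perrrtSf}) is pathwise identical to the walk of Definition \ref{defSRRWG} built from these inputs. Since these four sequences have exactly the joint law required there (i.i.d.\ $\mu$, i.i.d.\ Bernoulli$(\alpha)$, independent uniforms, transformations independent of the rest), the conclusion follows with no distributional computation at all.

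The key step is an induction on $k$ showing that the value assigned to vertex $k$ equals $g_k$ if $\xi_k=0$ (including $k=1$, with $\xi_1\equiv0$) and $T_k(X_{u_k})$ if $\xi_k=1$.

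\emph{Case $\xi_k=0$.} The edge $(k,u_k)$ is deleted. Every later vertex $\ell>k$ attaches to an older vertex, so all other vertices of the component of $k$ have labels $>k$. Hence $k$ is the smallest label, i.e.\ $k$ is the root of $\CalC_{k,n}$ for all $n\ge k$, and it receives $g_k$. This matches the first branch of the recursion.

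\emph{Case $\xi_k=1$.} The edge $(k,u_k)$ is retained, so $k$ lies in the same cluster as $u_k<k$, say $\CalC_{j,k}$ with $j\le u_k$. The unique path from $k$ to $j$ in $\mathscr{F}_k$ is $k,u_k,u_{u_k},\dots,j$: each vertex's only edge to an older vertex is its parent edge, and retained parent edges lead down to the root. By (\ref{XkdefTrec}),
\[
X_k=T_k\bigl(T_{u_k}\circ\cdots\circ T_\ell(g_j)\bigr)=T_k(X_{u_k}),
\]
where the last equality is (\ref{XkdefTrec}) applied to $u_k$, or the induction hypothesis. This matches the second branch of the recursion.

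One consistency point must also be checked: the value at a vertex is fixed once assigned. Adding later vertices never changes the root or the root value of an existing cluster, as noted before the proposition. So the recursive assignment at time $k$ agrees with the one read off from $\mathscr{F}_n$ for any $n\ge k$.

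The only mild obstacle is this well-definedness and path-identification issue: making precise that the cluster path from $k$ is exactly the ancestral chain of retained parent edges, and that the root is the first ancestor $j$ with $\xi_j=0$. Once this is written down, multiplying the values in order gives exactly $S_n=X_1\cdots X_n$ as in Definition \ref{defSRRWG}.
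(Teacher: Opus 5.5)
Your proposal is correct and follows the paper's own argument: you show pathwise, with the same driving variables $(g_n),(\xi_n),(u_n),(T_n)$, that a root (exactly the vertices with $\xi_k=0$) carries $g_k$ and that every non-root vertex carries $T_k(X_{u_k})$, so the recursion of Definition \ref{defSRRWG} holds. The paper states this in two lines, and your root identification, ancestral-path identification and consistency check supply the details it leaves implicit.
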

\begin{remark}
 Proposition \ref{consgSRRWRRT} has already been established by Portillo del Valle for random walk with echoed steps, see \cite[Section 4]{del2025random}.
\end{remark}
\begin{proof}
A root $k$ has value $g_k$, and each non-root vertex $k$ has value $T_k(X_{u_k})$. Thus the assigned values satisfy the recursion in Definition \ref{defSRRWG} with the same random variables.
\end{proof}

For $n\geq 1$, let $\mathscr{I}_n:=\{1\leq j\leq n:|\CalC_{j,n}|=1\}$ be the set of isolated vertices in $\F_n$. Define
\begin{equation}
\label{defconditioning}
\mathcal{K}_n:=\sigma\bigl(\F_n,(T_j)_{2\leq j\leq n},
(g_j)_{j\in[n]\setminus\mathscr{I}_n}\bigr).
\end{equation}
The set $\mathscr I_n$ is determined by the forest, which, together with the transformations, is independent of the entire sequence $(g_j)$. Revealing the labels outside $\mathscr I_n$ therefore leaves the remaining labels independent with law $\mu$, conditionally on $\mathcal K_n$. Every $X_j$ with $j\notin\mathscr{I}_n$ is $\mathcal{K}_n$-measurable, since its component contains no isolated vertex. For each fixed $n$, the kernels below depend on $n$. We suppress this dependence in the notation. Conditionally on $\mathcal{K}_n$, the walk up to time $n$ is a time-inhomogeneous Markov chain with kernels
\begin{equation}
\label{defPj}
P_j=\begin{cases}
P_\mu,&j\in\mathscr{I}_n,\\
P^{(X_j)},&j\notin\mathscr{I}_n,
\end{cases}
\end{equation}
where, for $g\in G$,
\begin{equation}
\label{defPgmatrix}
P^{(g)}(x,y):=\mathbf{1}_{\{y=x\cdot g\}}.
\end{equation}
In particular, the deterministic steps may take values outside $\Gamma=\operatorname{supp}\mu$.

For $0\leq k\leq\ell\leq n$, define
\begin{equation}
\label{defPkell}
P_{k,\ell}:=P_{k+1}\cdots P_\ell,
\qquad P_{k,k}(x,y):=\mathbf{1}_{\{x=y\}}.
\end{equation}
These kernels give the conditional transition probabilities of $S$. When $G$ is countable, we identify them with their Markov operators on $L^2(G)$:
\begin{equation}
\label{Pkelloper}
P_{k,\ell}f(x):=\sum_{y\in G}P_{k,\ell}(x,y)f(y).
\end{equation}
In particular,
\begin{equation}
\label{defPkellxy}
P_{k,\ell}(x,y)=\langle\delta_x,P_{k+1}\cdots P_\ell\delta_y\rangle.
\end{equation}
Here $\delta_z$ is the Kronecker delta at $z$. For $G=\RR^d$, the same construction uses transition kernels on Borel sets instead of matrices.

We let $I(n):=|\mathscr{I}_n|$ be the number of isolated vertices in $\F_n$. The law of the forest depends only on $\alpha$. In \cite[Proposition 3.1]{peres2026mixing}, it has been shown that for any $\alpha \in [0,1)$ and $n\geq 1$, one has 
  \begin{equation}
      \label{upbdprobisolinear}
    \PP\left(I(n) \leq  \frac{(1-\alpha)n}{8}\right) \leq 2e^{-\frac{(1-\alpha)n}{18}}.
  \end{equation}

The following comparison gives a conditional version of (\ref{upbdprobisolinear}). Let $\Theta$ denote the sequence of transformations in Definition \ref{defSRRWG}, or the auxiliary random element in Remark \ref{remforestdependent}, and set
\[
\mathcal A_m:=\sigma\bigl(\Theta,(g_j)_{1\leq j\leq m},
(\xi_j,u_j)_{2\leq j\leq m}\bigr),\qquad m\geq 0.
\]
In either case, $\mathcal H_m:=\sigma(X_1,\ldots,X_m)\subseteq\mathcal A_m$, where $\mathcal H_0$ is trivial. Here $\mathcal A_m$ contains the full driving history and $\mathcal H_m$ records only the steps. The conditioning on $\mathcal K_n$ leaves the isolated labels unrevealed.

\begin{lemma}
\label{lemFmniso}
For integers $m\geq 0$ and $n\geq 1$, write
\[
J_{m,n}:=\bigl|\mathscr I_{m+n}\cap\{m+1,\ldots,m+n\}\bigr|.
\]
Then, for every $K\in\mathbb R$,
\begin{equation}
\label{eqblockisodom}
\PP(J_{m,n}\leq K\mid\mathcal A_m)
\leq \PP(I(n)\leq K+1)\qquad\text{a.s.}
\end{equation}
Moreover,
\begin{equation}
\label{eqblockisotail}
\PP\left(J_{m,n}\leq\frac{(1-\alpha)n}{8}
\,\middle|\,\mathcal A_m\right)
\leq 2e^{-(1-\alpha)n/18}\qquad\text{a.s.}
\end{equation}
Both estimates hold with $\mathcal H_m$ in place of $\mathcal A_m$. They also hold with $m$ replaced by any almost surely finite stopping time for the corresponding filtration.
\end{lemma}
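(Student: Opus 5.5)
We will compare the block of vertices $m+1,\ldots,m+n$ with an independent forest on $n$ vertices. Vertex $m+k$ is isolated in $\F_{m+n}$ precisely when $\xi_{m+k}=0$ and no later vertex $\ell\in\{m+k+1,\ldots,m+n\}$ with $\xi_\ell=1$ has $u_\ell=m+k$. These events depend only on $(\xi_j,u_j)_{m<j\leq m+n}$, which are independent of $\mathcal A_m$. So, conditionally on $\mathcal A_m$, $J_{m,n}$ has the same law as the deterministic-$m$ functional $J_{m,n}$ computed from fresh variables. It therefore suffices to prove the unconditional bound $\PP(J_{m,n}\leq K)\leq \PP(I(n)\leq K+1)$ for each fixed $m$.

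For that we will build a coupling with the forest $\F'_n$ on labels $1,\ldots,n$, with vertex $k$ playing the role of $m+k$. Set $\xi'_k=\xi_{m+k}$ for $k\geq 2$. For $\ell=m+k$, the pointer $u_\ell$ is uniform on $[m+k-1]$. We will couple it with $u'_k$, uniform on $[k-1]$, so that $u'_k=u_\ell-m$ whenever $u_\ell>m$; this is possible because, conditionally on $u_\ell>m$, the value $u_\ell-m$ is uniform on $[k-1]$. When $u_\ell\leq m$, we draw $u'_k$ independently.

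Now take a vertex $k\geq 2$ that is isolated in $\F'_n$. Then $\xi_{m+k}=\xi'_k=0$. Any later $\ell=m+j$ with $\xi_\ell=1$ and $u_\ell=m+k$ has $u_\ell>m$, hence $u'_j=k$, which would make $k$ non-isolated in $\F'_n$. Thus $m+k$ is isolated in $\F_{m+n}$. Vertex $1$ of $\F'_n$ (with $\xi'_1\equiv 0$) may fail to correspond, because $\xi_{m+1}$ can equal $1$. This is the only discrepancy and accounts for the $+1$. Hence $J_{m,n}\geq I'(n)-1$ pointwise, where $I'(n)\overset{d}{=}I(n)$, and (\ref{eqblockisodom}) follows.

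For (\ref{eqblockisotail}), apply (\ref{eqblockisodom}) with $K=(1-\alpha)n/8$, which gives the bound $\PP\bigl(I(n)\leq (1-\alpha)n/8+1\bigr)$. This is the step where the constant must be checked, and I expect it to be the main obstacle. The $+1$ has to be absorbed, either by reproving (\ref{upbdprobisolinear}) with slack — its proof in \cite{peres2026mixing} presumably has room, since the mean of $I(n)$ is of order $(1-\alpha)n/(2-\alpha)$ — or by a sharper coupling. One sharper option is to compare instead with $n$ vertices whose first vertex is the genuine vertex $m+1$. Another is to note that when $\xi_{m+1}=1$ one may use the forest on $\{m+2,\ldots,m+n\}$, and handle the boundary via monotonicity of the tail bound in $n$. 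I would first try the direct route: verify that the Chernoff-type argument behind (\ref{upbdprobisolinear}) gives the same bound for the threshold $(1-\alpha)n/8+1$. Failing that, I would handle small $n$ separately, since for $n<18/(1-\alpha)\cdot\log 2$ the right side exceeds $1$ trivially.

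Replacing $\mathcal A_m$ by $\mathcal H_m\subseteq\mathcal A_m$ follows from the tower property. For a stopping time $\tau$, decompose on $\{\tau=m\}$. On this event the future variables $(\xi_j,u_j)_{j>m}$ remain independent of $\mathcal A_m$, or of $\mathcal H_m$, which contains $\{\tau=m\}$. The fixed-$m$ bound then applies on each piece, and summing gives the result.
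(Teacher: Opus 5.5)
Your proof of (\ref{eqblockisodom}) matches the paper's. You use the same coupling of parent pointers ($u_{m+k}-m$ if $u_{m+k}>m$, an independent uniform otherwise), the same check that an isolated vertex $k\geq 2$ of the comparison forest gives an isolated vertex $m+k$ of $\F_{m+n}$, and the same loss of one at the root. Your preliminary remark is also correct: $J_{m,n}$ is a function of $(\xi_j,u_j)_{m<j\leq m+n}$ alone, hence independent of $\mathcal A_m$. The extensions to $\mathcal H_m$ and to stopping times agree with the paper.

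The gap is (\ref{eqblockisotail}), which you do not actually prove. Going through (\ref{eqblockisodom}) leaves you needing $\PP\bigl(I(n)\leq (1-\alpha)n/8+1\bigr)\leq 2e^{-(1-\alpha)n/18}$, and the statement of (\ref{upbdprobisolinear}) does not give this.
\begin{itemize}
\item Your fallback only disposes of $(1-\alpha)n\leq 18\log 2$.
\item The margin between the exponents $25/384$ and $1/18$ in the companion argument absorbs a shifted threshold only once $(1-\alpha)n$ is fairly large, so an intermediate range remains open.
\item Your other suggestions (restarting at $m+2$, or tying the first comparison vertex to the genuine $m+1$) are not carried out, and they run into the same mismatch between the root and the threshold.
\end{itemize}

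The missing idea is to avoid the root rather than absorb the $+1$. For $n\geq 2$, look at the comparison recursive tree before percolation. Its root is never a leaf. If a leaf $j$ has $\xi_{m+j}=0$, then $m+j$ is isolated in $\F_{m+n}$: any block vertex $\ell$ with $u_\ell=m+j>m$ would make $\ell-m$ a child of $j$ in the comparison tree. Hence $J_{m,n}\geq Z_n$, where, given the comparison tree, $Z_n\sim\operatorname{Bin}(V_n,1-\alpha)$ and $V_n$ is the number of leaves of a random recursive tree independent of $\mathcal A_m$. This $Z_n$ is exactly the quantity controlled inside the proof of (\ref{upbdprobisolinear}), via leaf concentration plus a binomial lower tail. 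That argument gives $e^{-n/18}+e^{-25(1-\alpha)n/384}\leq 2e^{-(1-\alpha)n/18}$ with no shift, and the case $n=1$ is trivial.
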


\begin{proof}
The case $m=0$ follows from $J_{0,n}=I(n)$. For $m\geq 1$, we construct a comparison forest on $\{1,\ldots,n\}$ using the vertices in the future block. Its first vertex is a root. For $2\leq j\leq n$, let $v_j$ be uniform on $\{1,\ldots,j-1\}$, independently of all other variables, and set
\[
\widetilde u_j:=
\begin{cases}
u_{m+j}-m,&u_{m+j}>m,\\
v_j,&u_{m+j}\leq m.
\end{cases}
\]
Join $j$ to $\widetilde u_j$ precisely when $\xi_{m+j}=1$. For each $1\leq i<j$,
\[
\PP(\widetilde u_j=i)
=\frac{1}{m+j-1}+\frac{m}{m+j-1}\frac{1}{j-1}
=\frac{1}{j-1}.
\]
These parent choices and the retained-edge indicators are independent of one another and of $\mathcal A_m$. Thus the comparison forest has the law of $\mathscr F_n$, independently of $\mathcal A_m$.

Every isolated vertex $j\geq 2$ in the comparison forest corresponds to an isolated vertex $m+j$ in $\mathscr F_{m+n}$. Indeed, its isolation forces $\xi_{m+j}=0$, and any retained edge from a later vertex to $m+j$ would also appear in the comparison forest. The only possible exception is the first vertex, since $m+1$ may be connected to a vertex at or before time $m$. Consequently, if $\widetilde I(n)$ is the number of isolated vertices in the comparison forest, then
\[
J_{m,n}\geq\widetilde I(n)-1.
\]
This proves (\ref{eqblockisodom}).

To prove (\ref{eqblockisotail}), suppose first that $n\geq2$ and consider the comparison tree before percolation, with parent choices $(\widetilde u_j)_{2\leq j\leq n}$. Let $V_n$ be its number of leaves, where a leaf is a vertex with no children. The root is not a leaf. If a leaf $j$ has $\xi_{m+j}=0$, then it is isolated in the comparison forest, so $m+j$ is isolated in $\mathscr F_{m+n}$. Consequently, if $Z_n$ counts these leaves with deleted parent edges, then
\[
J_{m,n}\geq Z_n,\qquad
\mathcal L\bigl(Z_n\mid(\widetilde u_j)_{2\leq j\leq n},\mathcal A_m\bigr)
=\operatorname{Bin}(V_n,1-\alpha).
\]
The comparison tree is a random recursive tree independent of $\mathcal A_m$. We can therefore apply the leaf estimate and the conditional binomial bound from the proof of \cite[Proposition 3.1]{peres2026mixing}, which give
\[
\PP\left(J_{m,n}\leq\frac{(1-\alpha)n}{8}
\,\middle|\,\mathcal A_m\right)
\leq e^{-n/18}+e^{-25(1-\alpha)n/384}
\leq 2e^{-(1-\alpha)n/18}.
\]
For $n=1$, the last bound exceeds $1$, so (\ref{eqblockisotail}) also holds.

Conditioning again gives the assertions for $\mathcal H_m$. The stopping-time statements follow by applying the deterministic-time estimates on each event $\{\tau=m\}$ and summing over $m$.
\end{proof}

\begin{proposition}[Conditional estimates]
\label{remconditionalestimates}
Let $S$ be a generalized SRRW as in Definition \ref{defSRRWG} or Remark \ref{remforestdependent}, with reinforcement parameter $\alpha\in[0,1)$ and step distribution $\mu$. Fix integers $m\geq0$ and $n\geq1$, and write $J_{m,n}$ as in Lemma \ref{lemFmniso}. Let $\mathcal G_{m,n}$ be the $\sigma$-algebra generated by $\mathcal A_m$, the pairs $(\xi_j,u_j)_{2\leq j\leq m+n}$, and the labels $g_j$ of the non-isolated vertices in $\{m+1,\ldots,m+n\}$.
\begin{enumerate}[(i)]
\item Conditionally on $\mathcal G_{m,n}$, the labels at the $J_{m,n}$ isolated vertices of the block are independent with law $\mu$, all other steps in the block are $\mathcal G_{m,n}$-measurable, and, starting from $S_m$, the conditional law of the walk on the block is that of a time-inhomogeneous Markov chain with kernels as in (\ref{defPj}): $P_\mu$ at the isolated times of the block and translation kernels at all remaining times.
\item If $G$ is countably infinite and $q:=\|P_\mu\|_{L^2(G)\to L^2(G)}<1$, then
\begin{equation}
\label{eqconditionalheatkernel}
\sup_{x\in G}\PP(S_{m+n}=x\mid\mathcal A_m)
\leq q^{(1-\alpha)n/8}+2e^{-(1-\alpha)n/18}.
\end{equation}
\item If $G$ is finite of size $N$, $\pi$ is the uniform distribution on $G$, and $\|P_\mu|_{L^2_0(G)}\|\leq q_0$ for some $q_0\in(0,1)$, where $L^2_0(G)$ is the subspace of $L^2(G)$ of functions with mean zero, then
\begin{equation}
\label{eqconditionalmixing}
\bigl\|\mathcal L(S_{m+n}\mid\mathcal A_m)-\pi\bigr\|_{\mathrm{TV}}
\leq \frac12\sqrt{N-1}\,q_0^{(1-\alpha)n/8}
+2e^{-(1-\alpha)n/18}.
\end{equation}
\end{enumerate}
All three assertions remain valid when $m$ is replaced by any almost surely finite stopping time for $(\mathcal A_m)_{m\geq0}$. The bounds in (ii) and (iii) also hold with $\mathcal H_m$ in place of $\mathcal A_m$, including at almost surely finite stopping times for $(\mathcal H_m)_{m\geq0}$.
\end{proposition}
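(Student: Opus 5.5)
We first establish (i), then derive (ii) and (iii) from it by an operator-norm argument on the block, and finally pass to stopping times and to $\mathcal H_m$.

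\emph{Part (i).} The forest up to time $m+n$ is a function of $(\xi_j,u_j)_{2\le j\le m+n}$. The labels $(g_j)_{j>m}$ are independent of $\mathcal A_m$ and of all forest variables. An isolated vertex $j\in\{m+1,\dots,m+n\}$ of $\mathscr F_{m+n}$ is a root with no retained children up to time $m+n$, so its label $g_j$ affects no other step in the block. By contrast, a non-isolated block vertex $j$ has $X_j$ equal to either its own label (if it is a root) or to $T_j$ applied to the value at $u_j$. This value is either $\mathcal A_m$-measurable (if $u_j\le m$) or determined by non-isolated block labels and the transformations, which are measurable with respect to $\Theta\subseteq\mathcal A_m$. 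Hence every non-isolated step is $\mathcal G_{m,n}$-measurable. The isolated labels are i.i.d. $\mu$ and independent of $\mathcal G_{m,n}$, since $\mathcal G_{m,n}$ involves only $\mathcal A_m$, forest variables and other labels. Since $S_m$ is $\mathcal A_m$-measurable, the product $S_{m+n}=S_m X_{m+1}\cdots X_{m+n}$ is then a time-inhomogeneous chain with kernels $P_\mu$ at isolated times and $P^{(X_j)}$ otherwise. The remark requires one point of care: there $T_j$ depends on past forest variables, but those lie in $\mathcal G_{m,n}$, so the argument is unchanged.

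\emph{Part (ii).} Conditionally on $\mathcal G_{m,n}$, we have
\[
\PP(S_{m+n}=x\mid\mathcal G_{m,n})=\langle\delta_{S_m},P_{m,m+n}\delta_x\rangle\le\|P_{m,m+n}\|.
\]
Each translation operator $P^{(g)}$ is unitary on $\ell^2(G)$ (right translation is a bijection), so $\|P_{m,m+n}\|\le q^{J_{m,n}}$. We split on the event $\{J_{m,n}>(1-\alpha)n/8\}$, where this gives $q^{(1-\alpha)n/8}$. On the complement we use the bound $1$ and apply (\ref{eqblockisotail}). Taking conditional expectation given $\mathcal A_m\subseteq\mathcal G_{m,n}$ yields (\ref{eqconditionalheatkernel}).

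\emph{Part (iii).} The argument is the same. Translations preserve $\pi$ and the space $L^2_0(G)$, and they are unitary on it. Writing $\mu_{S_m}P_{m,m+n}-\pi$ in the form $(\delta_{S_m}-\pi)$ acted on by the adjoint, we use
\[
\|\delta_z-\pi\|_2=\sqrt{(N-1)/N},
\]
and Cauchy--Schwarz, $\|\cdot\|_{\mathrm{TV}}\le\frac12\sqrt N\|\cdot\|_2$. This gives the bound $\frac12\sqrt{N-1}\,q_0^{J_{m,n}}$. Here one must check that the adjoint of $P_\mu$ on $L^2_0$ has the same norm $q_0$, which holds because $\|T^*\|=\|T\|$ and the restriction commutes with adjoints since $P_\mu$ is doubly stochastic. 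Splitting as in part (ii) and using convexity of total variation under conditional expectation finishes the argument.

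\emph{Stopping times and $\mathcal H_m$.} For a stopping time $\tau$, we apply the deterministic-$m$ bounds on each event $\{\tau=m\}\in\mathcal A_m$ and sum over $m$; Lemma \ref{lemFmniso} already covers this case. For $\mathcal H_m\subseteq\mathcal A_m$ we condition further, using the tower property (and convexity for the total variation norm). For $\mathcal H$-stopping times, $\{\tau=m\}\in\mathcal H_m\subseteq\mathcal A_m$, so they are also $\mathcal A$-stopping times and the same decomposition applies. The main obstacle is the measurability bookkeeping in (i), namely verifying under Remark \ref{remforestdependent} that revealing forest variables beyond time $m$ does not disturb the independence of the isolated labels. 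This holds because the fresh labels are independent of $\Theta$ and all forest variables.
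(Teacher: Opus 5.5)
Your proposal is correct and takes essentially the same route as the paper. It shows the non-isolated block steps are $\mathcal G_{m,n}$-measurable via the root label and the transformations along the retained path, and bounds the kernel product by $q^{J_{m,n}}$ (respectively by $q_0^{J_{m,n}}$ on $L^2_0(G)$, using Cauchy--Schwarz and $\|\delta_{S_m}-\pi\|_2=\sqrt{(N-1)/N}$), split on $\{J_{m,n}\leq(1-\alpha)n/8\}$ with (\ref{eqblockisotail}), then decomposes over $\{\tau=m\}$ and uses the tower property for $\mathcal H_m$; your explicit check that $P_\mu$ restricted to $L^2_0(G)$ and its adjoint have the same norm is a detail the paper leaves implicit, and it is correct.
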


\begin{proof}
By the definition of $\Theta$, the transformations up to time $m+n$ are $\mathcal G_{m,n}$-measurable, and so is the forest $\mathscr F_{m+n}$. Every step $X_k$ with $m+1\leq k\leq m+n$ and $k\notin\mathscr I_{m+n}$ is then $\mathcal G_{m,n}$-measurable: its value is obtained from the label at the root of its cluster by composing the transformations along the retained-edge path, and the root either lies at a time at most $m$, where its label is $\mathcal A_m$-measurable, or is a non-isolated vertex of the block, whose label has been revealed. Conditionally on $\Theta$ and the pairs $(\xi_j,u_j)_{2\leq j\leq m+n}$, the labels are independent with law $\mu$. Revealing the past labels and the labels at non-isolated vertices of the block therefore leaves the remaining labels independent with law $\mu$. This proves (i).

For (ii), conditionally on $\mathcal G_{m,n}$ the law of $S_{m+n}$ is $\delta_{S_m}$ times the kernel product in (i). Each translation kernel has operator norm $1$, so this product has norm at most $q^{J_{m,n}}$, and
$$
\PP(S_{m+n}=x\mid\mathcal A_m)
=\EE\bigl[\PP(S_{m+n}=x\mid\mathcal G_{m,n})\bigm|\mathcal A_m\bigr]
\leq\EE\bigl(q^{J_{m,n}}\mid\mathcal A_m\bigr).
$$
Splitting according to $J_{m,n}\leq(1-\alpha)n/8$ and using (\ref{eqblockisotail}) proves (\ref{eqconditionalheatkernel}).

For (iii), every factor in the kernel product preserves $\pi$, the translations are isometries of $L^2_0(G)$, and $P_\mu$ has norm at most $q_0$ there. The Cauchy--Schwarz inequality gives
\[
\begin{aligned}
\bigl\|\mathcal L(S_{m+n}\mid\mathcal G_{m,n})-\pi\bigr\|_{\mathrm{TV}}
&\leq\frac{\sqrt N}{2}\,q_0^{J_{m,n}}\|\delta_{S_m}-\pi\|_2\\
&=\frac12\sqrt{N-1}\,q_0^{J_{m,n}}.
\end{aligned}
\]
By convexity of total variation, taking conditional expectations given $\mathcal A_m$ and using this bound when $J_{m,n}>(1-\alpha)n/8$, and the bound $1$ otherwise, proves (\ref{eqconditionalmixing}) by (\ref{eqblockisotail}).

The versions of (ii) and (iii) with $\mathcal H_m$ follow from $\mathcal H_m\subseteq\mathcal A_m$ by the tower property and convexity of total variation. The stopping-time versions follow by applying the deterministic-time assertions on each event $\{\tau=m\}$ and summing over $m$.
\end{proof}

\begin{remark}
\label{remkernelgeneral}
Any bound for the kernel product in Proposition \ref{remconditionalestimates}(i) that is uniform over the translation factors and requires only a specified number of $P_\mu$ factors can be combined with (\ref{eqblockisotail}) to give a corresponding conditional estimate. This also applies to the lazy heat-kernel estimates proved below.
\end{remark}

\subsection{Concentration estimates and class-function comparison}

We now prove Proposition \ref{Strandabove3} by using (\ref{upbdprobisolinear}) and classical results on the concentration function.

\begin{proof}[Proof of Proposition \ref{Strandabove3}]
Since $\RR^d$ is abelian, we can write
\begin{equation}
\label{SndecomRd}
S_n=\sum_{j\in\mathscr{I}_n}g_j+
\sum_{j\in[n]\setminus\mathscr{I}_n}X_j.
\end{equation}
Conditionally on $\mathcal{K}_n$, the second sum is fixed and the first is a sum of $I(n)$ independent variables with law $\mu$. Since $\mu$ is non-singular, Esseen's concentration bound \cite[Theorem 6.2 and its corollary]{MR231419} gives
\[
\sup_{x\in\RR^d}\PP(S_n\in B(x,r)\mid\mathcal{K}_n)
\leq \frac{C(r,\mu)}{(I(n)+1)^{d/2}}.
\]
The constant can be chosen to include $I(n)=0$. Taking expectations and using (\ref{upbdprobisolinear}), we obtain
\[
\sup_{x\in\RR^d}\PP(S_n\in B(x,r))
\leq C(r,\mu)\left(\frac{8}{1-\alpha}\right)^{d/2}n^{-d/2}
+2e^{-(1-\alpha)n/18}.
\]
This proves the bound. If $d\geq 3$, the Borel--Cantelli lemma, applied to balls of positive integer radius, gives $\|S_n\|\to\infty$ almost surely.
\end{proof}

When $\mu$ is a class function, we can also group the ``free" steps (namely, the i.i.d. $\mu$-distributed steps corresponding to isolated vertices) together, as in the proof of Proposition \ref{Strandabove3}. We can then upper bound the transition probabilities when there is a sufficient number of free steps.

\begin{proof}[Proof of Proposition \ref{propclassbd}]
For every $g\in G$ and $f\in L^2(G)$, the class-function assumption gives
\[
\begin{aligned}
(P_\mu P^{(g)}f)(x)
&=\sum_{y\in G}\mu(x^{-1}yg^{-1})f(y)\\
&=\sum_{y\in G}\mu(g^{-1}x^{-1}y)f(y)
=(P^{(g)}P_\mu f)(x).
\end{aligned}
\]
Thus
\begin{equation}
\label{classinterchmug}
P_\mu P^{(g)}=P^{(g)}P_\mu.
\end{equation}
Condition on $\mathcal{K}_n$, and let $m_1<\cdots<m_{n-I(n)}$ be the non-isolated vertices. Commuting the $P_\mu$ factors past the translation operators gives
\[
\PP^{(\alpha)}(S_n=x\mid\mathcal{K}_n)
=P_\mu^{I(n)}(X_{m_1}\cdots X_{m_{n-I(n)}},x).
\]
An empty product here equals $e_G$. By translation invariance and the assumption on the non-reinforced walk, this is at most $\varepsilon$ whenever $I(n)\geq m$. If $n\geq8m/(1-\alpha)$, then (\ref{upbdprobisolinear}) implies
\[
\PP^{(\alpha)}(S_n=x)
\leq\varepsilon+\PP(I(n)<m)
\leq\varepsilon+2e^{-(1-\alpha)n/18}. 
\]
\end{proof}

\subsection{Nonamenable groups and elephant random walks}

We next turn to Corollary \ref{nonamentran}. We first record the representation of the ERW as a generalized SRRW.

\begin{lemma}
\label{ERWgSRRW}
The ERW on a Cayley graph $G_{\Gamma}$ with a finite symmetric generating set $\Gamma$ of size at least two is a generalized SRRW. If its memory parameter satisfies $p\in(0,1)$, the reinforcement parameter in this representation is strictly less than $1$.
\end{lemma}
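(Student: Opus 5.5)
We write the ERW step rule as a mixture of a fresh uniform step and a random transformation of a uniformly chosen past step, and read off the parameters. Let $k:=|\Gamma|\geq 2$ and let $\mu$ be uniform on $\Gamma$. For the ERW, given the selected past step $X_{u_n}=s$, the law of the new step is
\[
\nu_s(t)=p\mathbf 1_{\{t=s\}}+\frac{1-p}{k-1}\mathbf 1_{\{t\neq s\}},\qquad t\in\Gamma .
\]
Every $\nu_s$ dominates the constant $c:=\min\{p,(1-p)/(k-1)\}$ times counting measure on $\Gamma$. So we set $1-\alpha:=kc$, which lies in $[0,1]$ because the total mass is $1$, and decompose
\[
\nu_s=(1-\alpha)\mu+\alpha\,\lambda_s,
\qquad \lambda_s:=\frac{\nu_s-kc\,\mu}{\alpha}\quad(\alpha>0).
\]
Here $\lambda_s$ is a probability measure on $\Gamma$.

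Next we realize $\lambda_s$ by a random map that does not depend on $s$ in its randomness. Let $(U_n)_{n\geq 2}$ be i.i.d. uniform on $[0,1]$, independent of everything else. Fix an enumeration of $\Gamma$, and let $T_n(s)$ be the $\lambda_s$-quantile at $U_n$, i.e.\ the first $t$ in the enumeration whose cumulative $\lambda_s$-mass exceeds $U_n$; on $G\setminus\Gamma$ put $T_n(g)=g$. The maps $T_n$ are jointly measurable and i.i.d., so the four sequences are mutually independent as Definition \ref{defSRRWG} requires. The first step $g_1\sim\mu$ matches the ERW's first step.

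We then check that the resulting generalized SRRW has the ERW law. By induction every $X_j\in\Gamma$. Conditionally on the past, with $u_n$ uniform and independent, we have
\[
\PP(X_n=t\mid X_1,\dots,X_{n-1},u_n)=(1-\alpha)\mu(t)+\alpha\lambda_{X_{u_n}}(t)=\nu_{X_{u_n}}(t).
\]
This is exactly the ERW transition rule, so the two processes have the same joint law of $(X_n)$, hence of $(S_n)$. The degenerate case $\alpha=0$ (which happens iff $p=1/k$) is simply i.i.d.\ uniform steps, which is a generalized SRRW with any $T_n$. In the case $p=0$ we have $c=0$, so $\alpha=1$; this is still a valid representation, just with $\alpha=1$.

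For the second assertion, take $p\in(0,1)$. Then $p>0$ and $(1-p)/(k-1)>0$, so $c>0$ and $\alpha=1-kc<1$.

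The only delicate point is ensuring the transformations are independent of the fresh steps and of $(\xi_n,u_n)$. The quantile coupling with separate uniforms $U_n$ handles this, since the dependence on the chosen past step enters only through the argument $s$ of $T_n$.
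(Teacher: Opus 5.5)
Your proof is correct and is essentially the paper's argument: with $c=\min\{p,(1-p)/(k-1)\}$, your decomposition reproduces exactly the paper's two regimes. When $p\geq 1/k$ it gives $\alpha=(kp-1)/(k-1)$ and $\lambda_s=\delta_s$, so $T_n=\operatorname{Id}$ and the walk is a usual SRRW; when $p<1/k$ it gives $\alpha=1-kp$ and $\lambda_s$ uniform on $\Gamma\setminus\{s\}$. The only difference is that you realize $\lambda_s$ by a generic quantile coupling, whereas the paper uses the explicit random map $s_i\mapsto s_{\sigma^{Y_n}(i)}$, with $\sigma$ a $k$-cycle and $Y_n$ uniform on $\{1,\ldots,k-1\}$. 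One trivial fix: take $U_n$ uniform on $[0,1)$, so that your quantile map is defined at every point and not only almost surely.
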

\begin{proof}
Write $\Gamma=\{s_1,\ldots,s_r\}$, where $r\geq 2$, and let $\mu$ be uniform on $\Gamma$. If $p\geq 1/r$, the ERW is a usual SRRW with parameter $\alpha=(rp-1)/(r-1)$, see \cite[Section 2.2]{mukherjee2025elephant}. If $p<1/r$, let $\sigma=(1\,2\,\cdots\,r)$ and set
\[
T_n(s_i)=s_{\sigma^{Y_n}(i)},\qquad 1\leq i\leq r,
\]
where the $Y_n$ are i.i.d. uniform on $\{1,\ldots,r-1\}$ and independent of the innovations and the variables $(\xi_n,u_n)$. Extend $T_n$ by the identity outside $\Gamma$. Then $T_n(s_i)$ is uniform on $\Gamma\setminus\{s_i\}$. With $\alpha=1-rp$, conditionally on selecting the past direction $s_i$, the probabilities of choosing $s_i$ and any fixed $s_j\neq s_i$ are precisely the ERW probabilities:
\[
\frac{1-\alpha}{r}=p,
\qquad
\frac{1-\alpha}{r}+\frac{\alpha}{r-1}=\frac{1-p}{r-1}.\qedhere
\]
\end{proof}

The representation in Lemma \ref{ERWgSRRW} has parameter $\alpha=1$ when $p=0$. To include this endpoint, we use the following estimates for the numbers of steps in each direction. These estimates depend only on the step sequence, and not on the group structure.

\begin{lemma}
\label{lemERWcounts}
Let $(X_n)_{n\geq 1}$ be the step sequence of an ERW in $r\geq 2$ directions $\{s_1,\ldots,s_r\}$, with memory parameter $p\in[0,1)$. Set
\begin{equation}
\label{countstepS}
N_n(i):=\sum_{j=1}^n\mathbf{1}_{\{X_j=s_i\}},\qquad
\Delta_n(i):=N_n(i)-\frac nr,\qquad
\beta:=\frac{rp-1}{r-1},
\end{equation}
and write $\Delta_n=(\Delta_n(i))_{1\leq i\leq r}$. Define
\[
V_\beta(n):=
\begin{cases}
n,&\beta<1/2,\\
n\log(n+1),&\beta=1/2,\\
n^{2\beta},&\beta>1/2.
\end{cases}
\]
There exist constants $c,C>0$, depending only on $p$ and $r$, such that, for all $n\geq 1$ and $u\geq 0$,
\begin{equation}
\label{eqERWcounttail}
\PP(\|\Delta_n\|\geq u)\leq C\exp\left(-\frac{cu^2}{V_\beta(n)}\right).
\end{equation}
In particular, $\EE\|\Delta_n\|^2\leq C V_\beta(n)$. Moreover, for every $\delta>0$, there exist constants $c_\delta,C_\delta>0$, depending only on $p,r,\delta$, such that, for all $m\geq 1$,
\begin{equation}
\label{eqERWcountproportion}
\PP\left(\sup_{n\geq m}\frac{\|\Delta_n\|}{n}\geq\delta\right)
\leq C_\delta e^{-c_\delta m}.
\end{equation}
\end{lemma}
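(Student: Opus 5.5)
The plan is to reduce everything to a martingale with bounded increments, obtained from the linear conditional drift of the counts.

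\textbf{Step 1: one-step drift.} Let $\mathcal F_n:=\sigma(X_1,\ldots,X_n)$. From the definition of the ERW (choose a uniform past step, repeat it with probability $p$, otherwise move uniformly to one of the other $r-1$ directions),
\[
\PP(X_{n+1}=s_i\mid\mathcal F_n)=p\frac{N_n(i)}{n}+\frac{1-p}{r-1}\Bigl(1-\frac{N_n(i)}{n}\Bigr)=\frac{1-\beta}{r}+\beta\frac{N_n(i)}{n}.
\]
Subtracting $1/r$ gives
\[
\EE[\Delta_{n+1}(i)\mid\mathcal F_n]=\Bigl(1+\frac{\beta}{n}\Bigr)\Delta_n(i).
\]
Here $\beta\in[-1/(r-1),1)\subseteq[-1,1)$, and $\beta<1$ because $p<1$.

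\textbf{Step 2: proof of (\ref{eqERWcounttail}).} Set $a_2:=1$ and $a_n:=\prod_{k=2}^{n-1}(1+\beta/k)$ for $n\geq 2$. Starting at $k=2$ avoids the factor $1+\beta$, which vanishes when $\beta=-1$. For $k\geq 2$ we have $1+\beta/k\geq 1/2$, and $a_n\asymp n^{\beta}$ with constants depending on $\beta$. Then $M_n(i):=\Delta_n(i)/a_n$, $n\geq 2$, is a martingale. Since $\Delta_{n+1}(i)-(1+\beta/n)\Delta_n(i)$ is bounded by $2$, its increments satisfy $|M_{n+1}(i)-M_n(i)|\leq C/a_{n+1}$, and $|M_2(i)|\leq 2$.

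Azuma--Hoeffding then gives
\[
\PP(|\Delta_n(i)|\geq u)\leq 2\exp\Bigl(-\frac{cu^2}{a_n^2\sum_{k\leq n}a_k^{-2}}\Bigr).
\]
The variance proxy is $a_n^2\sum_{k\leq n}a_k^{-2}\asymp n^{2\beta}\sum_{k\leq n}k^{-2\beta}$. This is of order $n$ for $\beta<1/2$, of order $n\log(n+1)$ for $\beta=1/2$, and of order $n^{2\beta}$ for $\beta>1/2$; that is, it is $\asymp V_\beta(n)$ in all three regimes. A union bound over the $r$ coordinates, using $\|\Delta_n\|\leq\sqrt r\max_i|\Delta_n(i)|$, gives (\ref{eqERWcounttail}). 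Integrating the tail gives $\EE\|\Delta_n\|^2\leq CV_\beta(n)$.

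\textbf{Step 3: proof of (\ref{eqERWcountproportion}).} This is the main obstacle. Applying Step 2 with $u=\delta n$ and summing over $n\geq m$ only yields $\exp(-cm^{2-2\beta})$, which is weaker than $e^{-cm}$ when $\beta>1/2$. I would instead use the contracting dynamics of the proportions $Y_n:=\Delta_n/n$. Step 1 gives
\[
Y_{n+1}=\Bigl(1-\frac{1-\beta}{n+1}\Bigr)Y_n+\frac{\varepsilon_{n+1}}{n+1},
\]
where $(\varepsilon_j)$ are bounded martingale differences. Fix $k=\lceil\eta m\rceil$ and iterate from time $k$. Writing $b_{k,n}:=\prod_{j=k+1}^{n}\bigl(1-\frac{1-\beta}{j}\bigr)$, we obtain
\[
Y_n=b_{k,n}Y_k+b_{k,n}\sum_{j=k+1}^{n}\frac{\varepsilon_j}{j\,b_{k,j}}.
\]
We have $b_{k,n}\asymp(k/n)^{1-\beta}\in(0,1]$ once $k\geq 3$. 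Since $|Y_k|\leq 1$ trivially, choosing $\eta=\eta(\delta,\beta)$ small makes the first term at most $\delta/2$ for every $n\geq m$.

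For the noise term I would split $n\geq m$ into dyadic blocks $[2^\ell m,2^{\ell+1}m)$. On each block, $b_{k,n}$ is comparable to $b_{k,2^\ell m}$, and the martingale $\sum_j\varepsilon_j/(jb_{k,j})$ has quadratic variation up to the block end of order $b_{k,2^{\ell}m}^{-2}/(2^\ell m)$ (using $1-\beta>0$, so $j^{-2}b_{k,j}^{-2}$ grows like $j^{2(1-\beta)-2}$, which is summable-comparable). The maximal Azuma inequality (Doob applied to the exponential martingale) then bounds the probability that the noise term exceeds $\delta/2$ somewhere in block $\ell$ by $2r\exp(-c\delta^2 2^\ell m)$. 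Summing over $\ell\geq 0$ gives $C_\delta e^{-c_\delta m}$. Small $m$, where $k<3$, is absorbed into the constant $C_\delta$.

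The delicate point is making the weighted-martingale bookkeeping uniform in $\beta\in[-1,1)$, in particular near $\beta=-1$ where early factors are small. This is handled by starting all products at index $3$.
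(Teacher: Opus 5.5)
Steps 1 and 2 are the paper's argument. The paper unrolls the same recursion as $\Delta_n(i)=\sum_j b_{j,n}\varepsilon_j(i)$ with deterministic weights $b_{j,n}\le C(n/j)^\beta$, which is your martingale $\Delta_n/a_n$ written out, and applies Azuma--Hoeffding in each coordinate.

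For (\ref{eqERWcountproportion}) your route differs. The paper restarts the proportion recursion separately for each $n$, at $k=\lceil\theta n\rceil$. Then every noise coefficient $\frac1j\prod_{\ell=j+1}^n(1-\gamma/\ell)$ is at most $1/k$, so the squared sum is at most $n/k^2\le 1/(\theta^2 n)$, with no case analysis in $\beta$. Plain Azuma gives $\PP(|A_n(i)|\geq t)\leq 2e^{-t^2\theta^2 n/8}$, and the union bound over $n\geq m$ is a geometric series, so no maximal inequality or dyadic blocks are needed. You instead restart once, at $k=\lceil\eta m\rceil$, and control whole dyadic blocks with Doob's maximal inequality. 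This is viable, but it forces you to track $b_{k,j}$ over ranges where $j/k$ is unbounded. That is exactly where your bookkeeping goes wrong.

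\textbf{The error.} We have $j^{-2}b_{k,j}^{-2}\asymp k^{-2(1-\beta)}j^{-2\beta}$. The sum of these terms up to $2N$ is of order $b_{k,N}^{-2}/N$ only when $\sum_{j\le N}j^{-2\beta}\asymp N^{1-2\beta}$, that is, only when $\beta<1/2$. That is precisely the regime where Step 3 is unnecessary, since Step 2 with $u=\delta n$ already gives $e^{-c\delta^2 n}$.

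For $\beta>1/2$, the quadratic variation is dominated by the terms near $j=k$ and is of order $k^{-1}$. This exceeds your claimed $b_{k,N}^{-2}/N$ by the factor $(N/k)^{2\beta-1}$. At $\beta=1/2$ it is of order $k^{-1}\log(N/k)$. The correct block estimates are therefore:
\begin{itemize}
\item for $\beta>1/2$: $2r\exp\bigl(-c\,\delta^2\eta^{2\beta-1}2^{2(1-\beta)\ell}m\bigr)$, not $2r\exp(-c\delta^2 2^\ell m)$;
\item for $\beta=1/2$: $2r\exp\bigl(-c\,\delta^2 2^\ell m/\log(2^{\ell+1}/\eta)\bigr)$.
\end{itemize}
Since $k\asymp\eta m$ and $1-\beta>0$, these still sum over $\ell\geq 0$ to $C_\delta e^{-c_\delta m}$. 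So your conclusion holds once this estimate is corrected. As written, however, the justification of the key step is false in exactly the case $\beta\geq 1/2$ that Step 3 was introduced to handle.
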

\begin{proof}
Recall that $\mathcal H_n=\sigma(X_1,\ldots,X_n)$. By the definition of the ERW,
\begin{equation}
\label{eqERWdirectionprob}
q_{n+1}(i):=\PP(X_{n+1}=s_i\mid\mathcal H_n)
=\frac1r+\beta\frac{\Delta_n(i)}n.
\end{equation}
Set $\varepsilon_1(i)=\Delta_1(i)$ and, for $n\geq 1$, set
$\varepsilon_{n+1}(i)=\mathbf{1}_{\{X_{n+1}=s_i\}}-q_{n+1}(i)$.
These are martingale differences bounded in absolute value by $1$, and
\begin{equation}
\label{eqERWcountrecursion}
\Delta_{n+1}(i)=\left(1+\frac\beta n\right)\Delta_n(i)+\varepsilon_{n+1}(i).
\end{equation}
Thus
\[
\Delta_n(i)=\sum_{j=1}^n b_{j,n}\varepsilon_j(i),\qquad
b_{j,n}:=\prod_{k=j}^{n-1}\left(1+\frac\beta k\right),
\]
with the convention that an empty product is $1$. Since $-1\leq\beta<1$, these deterministic coefficients are nonnegative. Whenever $b_{j,n}>0$, the inequality $\log(1+x)\leq x$ and comparison of harmonic sums with an integral give
\[
\log b_{j,n}\leq\beta\sum_{k=j}^{n-1}\frac1k
=\beta\log\frac nj+O_\beta(1).
\]
Thus $b_{j,n}\leq C(n/j)^\beta$. This bound also holds for $\beta=-1$ because $b_{1,n}=0$ for $n\geq2$. Thus,
\[
\sum_{j=1}^n b_{j,n}^2
\leq Cn^{2\beta}\sum_{j=1}^n j^{-2\beta}
\leq C\begin{cases}
n,&2\beta<1,\\
n\log(n+1),&2\beta=1,\\
n^{2\beta},&2\beta>1.
\end{cases}
\]
Orthogonality of the martingale differences then gives $\EE\|\Delta_n\|^2\leq C V_\beta(n)$.  For each fixed $n$, the coefficients $(b_{j,n})$ are deterministic, so the Azuma--Hoeffding inequality applies to the martingale sum in each coordinate. Since $\|\Delta_n\|\geq u$ implies $|\Delta_n(i)|\geq u/\sqrt r$ for some $i$, a union bound gives (\ref{eqERWcounttail}). 

When $\beta\geq1/2$, substituting $u=\delta n$ into (\ref{eqERWcounttail}) does not give an exponential bound in $n$. To prove (\ref{eqERWcountproportion}), we instead restart the recursion at a small fixed fraction of $n$. The contribution of the earlier history then becomes uniformly small, while the squared coefficients of the remaining martingale sum add up to $O(1/n)$.

Put $A_n(i)=\Delta_n(i)/n$ and $\gamma=1-\beta\in(0,2]$. Fix $t>0$ and choose $\theta\in(0,1/4)$ such that $(2\theta)^\gamma\leq t/2$. For sufficiently large $n$, let $k=\lceil\theta n\rceil$. Dividing (\ref{eqERWcountrecursion}) by $n+1$ and iterating from $k$ gives
\[
A_n(i)=\prod_{\ell=k+1}^n\left(1-\frac\gamma\ell\right)A_k(i)
+\sum_{j=k+1}^n\frac{\varepsilon_j(i)}j
\prod_{\ell=j+1}^n\left(1-\frac\gamma\ell\right).
\]
All factors in these products lie in $[0,1]$, and
\[
\prod_{\ell=k+1}^n\left(1-\frac\gamma\ell\right)
\leq\exp\left(-\gamma\sum_{\ell=k+1}^n\frac1\ell\right)
\leq\left(\frac{k+1}{n+1}\right)^\gamma.
\]
For all sufficiently large $n$, we have $(k+1)/(n+1)\leq2\theta$. Since $|A_k(i)|\leq1$, the first term therefore has absolute value at most $(2\theta)^\gamma\leq t/2$.
The sum is a martingale sum with deterministic coefficients whose squared sum is at most $n/k^2\leq1/(\theta^2n)$. Another application of the Azuma--Hoeffding inequality yields
\[
\PP(|A_n(i)|\geq t)\leq 2\exp(-t^2\theta^2 n/8).
\]
Increasing the prefactor covers the finitely many remaining $n$. Take $t=\delta/(2\sqrt r)$, sum over the coordinates, and then over $n\geq m$ to obtain (\ref{eqERWcountproportion}).
\end{proof}

\begin{proof}[Proof of Corollary \ref{nonamentran}]
(i). Let $\nu$ be the uniform probability measure on $\Gamma$, and put $r=|\Gamma|$. Since $\nu$ is symmetric and irreducible and $G_\Gamma$ is non-amenable, Kesten's theorem gives $\|P_\nu\|<1$, see, for example, \cite[Theorem 5.1.5]{MR4628024}. Let $b=r\min_{g\in\Gamma}\mu(g)>0$. When $b<1$ we can write $\mu=b\nu+(1-b)\nu'$ for a probability measure $\nu'$, and 
\[
\|P_\mu\|\leq b\|P_\nu\|+1-b<1.
\]
The same conclusion holds when $b=1$, since then $\mu=\nu$.

By (\ref{defPkellxy}), for every $x\in G$,
\[
\PP(S_n=x\mid\mathcal{K}_n)=\langle\delta_{e_G},P_1\cdots P_n\delta_x\rangle\leq\prod_{j=1}^n\|P_j\|\|P_\mu\|^{I(n)},
\]
where $\|\delta_x\|_2=1$ for every $x$, and each non-isolated step gives a translation operator of norm $1$. In particular, for any $\ell\geq1$,
\[
P_\nu^\ell(y,x)=\langle\delta_y,P_\nu^\ell\delta_x\rangle
\leq\|P_\nu\|^\ell.
\]
Taking expectations and using (\ref{upbdprobisolinear}), we obtain
\[
\sup_{x\in G}\PP(S_n=x)
\leq\|P_\mu\|^{(1-\alpha)n/8}
+2e^{-(1-\alpha)n/18}
\leq Ce^{-cn}.
\]
To absorb the prefactor, note that the fresh-step part of the transition law gives, for all $n\geq1$,
\[
\sup_{x\in G}\PP(S_n=x)
\leq q:=\alpha+(1-\alpha)\max_{g\in\Gamma}\mu(g)<1.
\]
Choose $N\geq1$ so that $Ce^{-cn}\leq e^{-cn/2}$ for $n\geq N$. Then (\ref{returnprobexpon}) holds with $\kappa=\max\{e^{-c/2},q^{1/N}\}<1$.

(ii). Write $\Gamma=\{s_1,\ldots,s_r\}$ and let $\Delta_n$ and $\beta$ be as in Lemma \ref{lemERWcounts}. Let $P_\nu$ be the simple random walk operator on $G_\Gamma$, and choose $\eta>0$ such that
\[
a:=(1+\eta)\|P_\nu\|<1.
\]
For $n\geq2$, put $m=\lfloor n/2\rfloor$ and define
\[
E_n:=\left\{\frac{\|\Delta_k\|}{k}\leq\frac\eta r
\text{ for every }m\leq k<n\right\}.
\]
By (\ref{eqERWcountproportion}), $\PP(E_n^c)\leq Ce^{-cn}$. Along every step history contributing to $E_n$, (\ref{eqERWdirectionprob}) and $|\beta|\leq1$ give
\[
q_{k+1}(i)\leq\frac{1+\eta}{r},\qquad m\leq k<n,\quad 1\leq i\leq r.
\]
Condition on $\mathcal H_m$ and sum the original path probabilities over all step sequences leading to $x$ and satisfying $E_n$. Each such probability is at most $(1+\eta)^{n-m}$ times the corresponding simple random walk probability. Therefore,
\[
\PP(S_n=x,E_n\mid\mathcal H_m)
\leq(1+\eta)^{n-m}P_\nu^{n-m}(S_m,x)
\leq a^{n-m}.
\]
It follows that $\sup_x\PP(S_n=x)\leq Ce^{-cn}$. A finite symmetric generating set of a nonamenable group has $r\geq3$. Put
\[
M:=\max\left\{p,\frac{1-p}{r-1}\right\}<1.
\]
For $n\geq2$, the conditional probability $\PP(X_n=s_i\mid\mathcal H_{n-1})$ is a convex combination of $p$ and $(1-p)/(r-1)$, with weights $N_{n-1}(i)/(n-1)$ and $1-N_{n-1}(i)/(n-1)$. Consequently,
\[
\begin{aligned}
\PP(S_n=x\mid\mathcal H_{n-1})
&=\PP(X_n=S_{n-1}^{-1}x\mid\mathcal H_{n-1})\leq M.
\end{aligned}
\]
The probability is zero if $S_{n-1}^{-1}x\notin\Gamma$. For $n=1$, the same bound follows from $1/r\leq M$. Taking expectations gives $\sup_x\PP(S_n=x)\leq M$ for all $n\geq1$. As in Part (i), the prefactor can now be absorbed into a constant $\rho\in(0,1)$.

Finally, either exponential bound implies positive lower speed. Indeed, $|\{x\in G:d(e_G,x)\leq k\}|\leq(r+1)^k$ for integers $k\geq0$. If $\lambda\in(0,1)$ denotes $\kappa$ or $\rho$, choose $\varepsilon>0$ such that $(r+1)^\varepsilon\lambda<1$. Then
\[
\sum_{n=1}^\infty\PP(d(e_G,S_n)\leq\varepsilon n)
\leq\sum_{n=1}^\infty\bigl((r+1)^\varepsilon\lambda\bigr)^n<\infty.
\]
The Borel--Cantelli lemma gives $\liminf_n d(e_G,S_n)/n\geq\varepsilon$ almost surely in both cases.
\end{proof}

\subsection{Evolving sets}
\label{secevosets}

To prove Theorem \ref{lazyGinf}, we adapt the evolving set method of Morris and Peres \cite{MR2198701}. This method was also used in the companion paper \cite{peres2026mixing} to estimate the mixing times of the SRRW on finite groups.

Fix $n\geq 1$ and condition on $\mathcal{K}_n$ defined in (\ref{defconditioning}). Recall the kernels $(P_j)_{j\in[n]}$ and $(P_{k,\ell})_{0\leq k\leq\ell\leq n}$ from (\ref{defPj}) and (\ref{defPkell}). Each $P_j$ is either $P_\mu$ or $P^{(g)}$ for some $g\in G$. Given these kernels, let $(U_j)_{j\in[n]}$ be independent uniform random variables on $(0,1)$ and define an evolving set process by
$$
 W_{j+1}=\left\{y\in G:\sum_{x\in W_j}P_{j+1}(x,y)\geq U_{j+1}\right\},
 \qquad 0\leq j<n.
$$
We write $\mathbf{P}$ and $\mathbf{E}$ for its conditional law and expectation. All initial sets are finite. Since the kernels are doubly stochastic,
$$
 \mathbf{E}(|W_{j+1}|\mid W_j)=|W_j|,
$$
so the sets remain finite a.s. and $(|W_j|)$ is a martingale. By \cite[Lemma 4.3 (i)]{peres2026mixing}, for $0\leq k\leq\ell\leq n$,
$$
 P_{k,\ell}(x,y)=\mathbf{P}(y\in W_\ell\mid W_k=\{x\}).
$$

\begin{lemma}
\label{distevolset}
For any $0\leq k\leq\ell\leq n$ and $x\in G$,
$$
 \left(\sum_{y\in G}P_{k,\ell}(x,y)^2\right)^{1/2}
 \leq\mathbf{E}\left(\sqrt{|W_\ell|}\mid W_k=\{x\}\right).
$$
\end{lemma}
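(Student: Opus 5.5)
The plan is the standard evolving-set argument from Morris--Peres. We start from the representation $P_{k,\ell}(x,y)=\mathbf{P}(y\in W_\ell\mid W_k=\{x\})$, quoted from \cite[Lemma 4.3 (i)]{peres2026mixing}. We then write the squared $\ell^2$ norm as an expectation over two independent copies of the evolving set process, and compare the overlap of the two copies with the size of a single copy via Cauchy--Schwarz.

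\textbf{Step 1 (two copies).} Let $(W_j)$ and $(W'_j)$ be two independent copies of the evolving set process with the same (conditionally fixed) kernels, both started at $W_k=W'_k=\{x\}$. Then
\[
\sum_{y\in G}P_{k,\ell}(x,y)^2=\sum_{y}\mathbf{P}(y\in W_\ell)\mathbf{P}(y\in W'_\ell)=\mathbf{E}\,|W_\ell\cap W'_\ell|.
\]
Interchanging sum and expectation is justified because everything is nonnegative, and $|W_\ell|<\infty$ a.s.

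\textbf{Step 2 (overlap bound).} We have $|W_\ell\cap W'_\ell|\le \min(|W_\ell|,|W'_\ell|)\le \sqrt{|W_\ell|}\sqrt{|W'_\ell|}$. By independence of the two copies,
\[
\mathbf{E}|W_\ell\cap W'_\ell|\le \mathbf{E}\sqrt{|W_\ell|}\,\mathbf{E}\sqrt{|W'_\ell|}=\bigl(\mathbf{E}(\sqrt{|W_\ell|}\mid W_k=\{x\})\bigr)^2.
\]
Taking square roots gives the claim.

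\textbf{Main obstacle.} The one point needing care is Step 1: we must justify that the same uniform variables are not shared, i.e.\ that using independent $U$'s for the second copy yields the same marginal $\mathbf{P}(y\in W'_\ell)=P_{k,\ell}(x,y)$. This holds because the marginal law of each copy is that of the evolving set process. We also need $\mathbf{E}\sqrt{|W_\ell|}<\infty$, which follows from $\mathbf{E}\sqrt{|W_\ell|}\le\sqrt{\mathbf{E}|W_\ell|}=1$ by the martingale property.
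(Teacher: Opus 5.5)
Your proof is correct and is essentially the paper's argument. The paper bounds $\|\mathbf{E}\,\mathbf{1}_{W_\ell}\|_{\ell^2(G)}\le \mathbf{E}\,\|\mathbf{1}_{W_\ell}\|_{\ell^2(G)}$ by Minkowski's inequality (or by citing Morris--Peres), and your computation with two independent copies and $|W_\ell\cap W'_\ell|\le\sqrt{|W_\ell|\,|W'_\ell|}$ is a direct proof of that same inequality in this setting.
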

\begin{proof}
This was proved in \cite[Equation (39)]{MR2198701} (with the invariant measure $\pi$ there being the counting measure). One can also apply Minkowski's inequality (see e.g. \cite[Section 6.3]{MR1681462}):
$$
\left(\sum_{y \in G} P_{k, \ell}(x, y)^2\right)^{1 / 2}=\left\|\mathbf{E}(\mathbf{1}_{W_\ell}(\cdot)\mid W_k=\{x\})\right\|_{\ell^2(G)} \leq \mathbf{E}_x \left\|\mathbf{1}_{W_{\ell}}\right\|_{\ell^2(G)} =\mathbf{E}\left(\sqrt{|W_\ell|}\mid W_k=\{x\}\right).
$$
\end{proof}

For nonempty finite sets $W,A\subset G$, define the Doob transform
$$
 \widehat K_j(W,A)=\frac{|A|}{|W|}\mathbf{P}(W_j=A\mid W_{j-1}=W).
$$
The martingale property shows that these are transition kernels on the nonempty finite subsets of $G$. Write $\widehat{\mathbf{P}}$ and $\widehat{\mathbf{E}}$ for the corresponding conditional law and expectation. Along a path of nonempty sets, the successive weights $|W_j|/|W_{j-1}|$ telescope to $|W_\ell|/|W_k|$. Paths ending at the empty set receive zero weight. Thus
\begin{equation}
\label{doobtranind}
 \widehat{\mathbf{P}}(W_\ell=A\mid W_k=W)
 =\frac{|A|}{|W|}\mathbf{P}(W_\ell=A\mid W_k=W).
\end{equation}
For a finite set $W\subset G$, let
\begin{equation}
\label{defWmu}
 W_\mu=\left\{y\in G:\sum_{x\in W}P_\mu(x,y)\geq\widetilde U\right\},
\end{equation}
where $\widetilde U$ is uniform on $(0,1)$, and write $K_\mu(W,A)=\mathbf{P}(W_\mu=A)$. For nonempty finite $W$, put
\begin{equation}
\label{defpsiW}
 \psi(W)=1-\mathbf{E}\sqrt{\frac{|W_\mu|}{|W|}},
\end{equation}
and define the nonincreasing root profile by
\begin{equation}
\label{defpsirinfG}
 \psi(r)=\inf\{\psi(W):\varnothing\neq W\subset G,\ |W|\leq r\},
 \qquad r\geq 1.
\end{equation}
Note that $\Phi(r)>0$ for each fixed $r$. Therefore, for countably infinite $G$, if $P_\mu$ is irreducible and $\mu(e_G)>0$, this root profile is positive by (\ref{psiinePhi}) below.

\begin{proposition}
\label{propevopsi}
Assume that $G$ is countably infinite, $P_\mu$ is irreducible and $\mu(e_G)>0$. For any $0\leq k\leq\ell\leq n$, any $x\in G$ and any $\varepsilon \in (0,1)$,
$$
 \sum_{y\in G}P_{k,\ell}(x,y)^2\leq\varepsilon
 \quad\text{if}\quad
 |\mathscr I_n\cap\{k+1,\ldots,\ell\}|
 \geq\int_4^{4/\varepsilon}\frac{du}{u\psi(u)}.
$$
\end{proposition}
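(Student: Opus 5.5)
We follow the Morris--Peres evolving-set argument, run under the Doob transform $\widehat{\mathbf P}$. Translation steps are neutral for the size of the set, and only the free steps in $\mathscr I_n\cap\{k+1,\ldots,\ell\}$ make $|W_j|$ grow.

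The starting point is Lemma \ref{distevolset} combined with (\ref{doobtranind}) for $W_k=\{x\}$:
\[
\Bigl(\sum_y P_{k,\ell}(x,y)^2\Bigr)^{1/2}\le \mathbf E\bigl(\sqrt{|W_\ell|}\mid W_k=\{x\}\bigr)=\widehat{\mathbf E}\bigl(|W_\ell|^{-1/2}\mid W_k=\{x\}\bigr).
\]
Set $Z_j:=|W_j|^{-1/2}$ under $\widehat{\mathbf P}$, with $Z_k=1$. It then suffices to show $\widehat{\mathbf E} Z_\ell\le\sqrt{\varepsilon}$ under the stated lower bound on the number of free steps.

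Next we analyze one step. If $P_{j+1}=P^{(g)}$ is a translation, then $W_{j+1}=W_j\cdot g$ deterministically, so $|W_{j+1}|=|W_j|$ and $Z_{j+1}=Z_j$. If $P_{j+1}=P_\mu$, then by (\ref{doobtranind}), with $W_\mu$ from (\ref{defWmu}),
\[
\widehat{\mathbf E}\bigl(Z_{j+1}\mid W_j=W\bigr)=\mathbf E\frac{|W_\mu|}{|W|}\,|W_\mu|^{-1/2}=|W|^{-1/2}\,\mathbf E\sqrt{|W_\mu|/|W|}=Z_j\bigl(1-\psi(W)\bigr)\le Z_j\bigl(1-\psi(Z_j^{-2})\bigr),
\]
where the last step uses that $\psi(\cdot)$ is nonincreasing. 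Write $f(z):=\psi(z^{-2})$. Only the free indices change $Z$, so $Z$ is a supermartingale with the one-step contraction $\widehat{\mathbf E}(Z_{j+1}\mid Z_j)\le Z_j(1-f(Z_j))$ at each of the $N:=|\mathscr I_n\cap\{k+1,\ldots,\ell\}|$ free times.

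To close, we apply the Morris--Peres lemma (\cite[Lemma 12]{MR2198701}, as used in \cite{peres2026mixing}). This lemma concerns a nonnegative process with $Z_0=1$ and $\mathbf E(Z_{t+1}\mid\cdot)\le Z_t(1-f(Z_t))$ for a nondecreasing $f$. Here $f$ is nondecreasing in $z$ because $\psi(r)$ is nonincreasing in $r$. The lemma gives $\mathbf EZ_t\le\delta$ as soon as $t\ge\int_\delta^1 \frac{dz}{z f(z/2)}$. The translation times can be deleted from the time index, since they leave $Z$ unchanged and the bound is uniform over the history, so we apply the lemma with $t=N$ and $\delta=\sqrt\varepsilon$. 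Substituting $u=4z^{-2}$ gives $f(z/2)=\psi(u)$ and $dz/z=-\tfrac12\,du/u$, so the condition becomes
\[
N\ge \tfrac12\int_4^{4/\varepsilon}\frac{du}{u\psi(u)}.
\]
The hypothesis $N\ge\int_4^{4/\varepsilon}\frac{du}{u\psi(u)}$ implies this. It follows that $\sum_y P_{k,\ell}(x,y)^2\le(\widehat{\mathbf E}Z_\ell)^2\le\varepsilon$.

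The main obstacle is getting the Morris--Peres lemma right in the time-inhomogeneous setting: its proof splits $\mathbf E Z$ into the event $Z\le$ half its mean and the complement, and that argument must go through when some steps are pure identities. We will handle this by re-indexing time to free steps only. We also need $\psi>0$, so that the integral is finite and $f$ is well defined; this holds under the irreducibility and laziness assumptions via (\ref{psiinePhi}). Finally, the Doob transform must be well defined, which holds because $|W_j|$ is a positive martingale up to absorption at $\varnothing$, and paths that reach $\varnothing$ receive weight zero.
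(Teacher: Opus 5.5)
Your proposal follows the paper's proof step for step. Both bound $\bigl(\sum_y P_{k,\ell}(x,y)^2\bigr)^{1/2}$ by $\widehat{\mathbf E}|W_\ell|^{-1/2}$ via Lemma \ref{distevolset} and (\ref{doobtranind}), and both note that translation steps only translate the set. Both then use the one-step contraction $1-\psi(W)\le 1-\psi(Z^{-2})$ at isolated times and re-index by the isolated times. The paper does the same with $Z_m=|W_{j_m}|^{-1/2}$, where $V_m$ is a translate of $W_{j_{m-1}}$. Finally, both apply the Morris--Peres lemma with $f_0(z)=\psi(z^{-2})$.

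The one real error is the constant in the cited lemma. The correct statement is \cite[Lemma 11 (iii)]{MR2198701}. From $\mathbf E(Z_{t+1}\mid\cdot)\le Z_t(1-f_0(Z_t))$ one only gets $L_t-L_{t+1}\ge \tfrac12 L_t f_0(L_t/2)$ for $L_t=\mathbf E Z_t$, because discarding the event $\{Z_t\le L_t/2\}$ can lose up to half of the mean. So the lemma reduces to part (i) with $f(z)=f_0(z/2)/2$. The conclusion $L_t\le\delta$ therefore needs $t\ge\int_\delta^1\frac{2\,dz}{z f_0(z/2)}$, not $\int_\delta^1\frac{dz}{z f_0(z/2)}$.

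The version you quote is false in general. Take $f_0=c\,\mathbf 1_{(\delta/2,\infty)}$. Let $Z_1=\delta/2$ with probability $\lambda$ and $Z_1=W$ otherwise, with $W$ chosen so that $\mathbf E Z_1=1-c$. Afterwards keep $\delta/2$ fixed and multiply $W$ by $1-c$ at each step. Letting $\lambda\to1$ gives $\mathbf E Z_t\to\frac\delta2\bigl(1-(1-c)^{t-1}\bigr)+(1-c)^t$. For $t=\lceil c^{-1}\log(1/\delta)\rceil$ this tends to $\delta+\frac\delta2(1-\delta)>\delta$ as $c\to0$, even though $t\ge\int_\delta^1\frac{dz}{zf_0(z/2)}=c^{-1}\log(1/\delta)$.

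With the correct factor, your substitution $u=4z^{-2}$ turns the requirement into exactly $N\ge\int_4^{4/\varepsilon}\frac{du}{u\psi(u)}$. This is precisely the hypothesis, as in the paper's display $\int_{\sqrt\varepsilon}^1\frac{2\,dz}{z\psi(4/z^2)}=\int_4^{4/\varepsilon}\frac{du}{u\psi(u)}$. So the proof closes, but with no slack. Your remark that the hypothesis is twice what is needed should be dropped.
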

\begin{proof}
Start the evolving set process from $W_k=\{x\}$. Under $\widehat{\mathbf{P}}$, all its sets are nonempty. Since $|W_k|=1$, (\ref{doobtranind}) weights the law of $W_\ell$ by $|W_\ell|$. Together with Lemma \ref{distevolset}, this gives
\begin{equation}
\label{PkellnormZell}
 \left(\sum_{y\in G}P_{k,\ell}(x,y)^2\right)^{1/2}
 \leq\mathbf{E}\sqrt{|W_\ell|}
 =\widehat{\mathbf{E}}\frac{1}{\sqrt{|W_\ell|}}.
\end{equation}
Let $j_1<\cdots<j_q$ be the isolated vertices in $\{k+1,\ldots,\ell\}$, and put $j_0=k$. At step $j \in \{k+1,\dots,\ell\}\setminus\{j_1,j_2,\dots,j_q\}$, $P_j=P^{(g)}$ for some $g\in G$, so $W_j=W_{j-1}g$ under both laws. In particular, its size does not change. Set
$$
 Z_m=|W_{j_m}|^{-1/2},\qquad 0\leq m\leq q.
$$
Given $W_{j_{m-1}}$, the set $V_m:=W_{j_m-1}$ is a deterministic translate of $W_{j_{m-1}}$. Thus $|V_m|=Z_{m-1}^{-2}$, and the Doob transform gives
\begin{align*}
 \widehat{\mathbf{E}}\left(\frac{Z_m}{Z_{m-1}}\,\middle|\,W_{j_{m-1}}\right)
 &=\sum_{A\subset G:\,|A|<\infty}\sqrt{\frac{|A|}{|V_m|}}K_\mu(V_m,A)\\
 &=1-\psi(V_m)\leq1-\psi(Z_{m-1}^{-2}).
\end{align*}
Since $Z_0=1$, \cite[Lemma 11 (iii)]{MR2198701}, applied to $f_0(z)=\psi(z^{-2})$ on $(0,1]$, yields
$$
 \widehat{\mathbf{E}}Z_q\leq\sqrt\varepsilon
 \quad\text{if}\quad
 q\geq\int_{\sqrt\varepsilon}^1\frac{2\,dz}{z\psi(4/z^2)}
 =\int_4^{4/\varepsilon}\frac{du}{u\psi(u)}.
$$
Here $f_0$ may be extended by $f_0(0)=0$ and $f_0(z)=\psi(1)$ for $z>1$. Finally, $|W_\ell|=|W_{j_q}|$, so (\ref{PkellnormZell}) proves the claim.
\end{proof}

We also use the reversed kernels
$$
 \bar P_j(x,y)=P_{n+1-j}(y,x),\qquad j\in[n].
$$
Define $\bar P_{k,\ell}:=\bar P_{k+1}\cdots\bar P_\ell$ for $0\leq k\leq\ell\leq n$, with an empty product equal to the identity kernel. These kernels are doubly stochastic and satisfy
$$
 P_{k,\ell}(x,y)=\bar P_{n-\ell,n-k}(y,x).
$$
At the times $\bar{\mathscr I}_n:=\{n+1-j:j\in\mathscr I_n\}$, the reversed kernel is $P_{\check\mu}$, where $\check\mu(g)=\mu(g^{-1})$; otherwise it is a translation. Proposition \ref{propevopsi} therefore applies with the root profile $\bar\psi$ of $P_{\check\mu}$. For every finite $A\subset G$, double stochasticity gives
$$
 P_\mu(A,A^c)=|A|-P_\mu(A,A)=P_\mu(A^c,A)=P_{\check\mu}(A,A^c).
$$
Consequently the two kernels have the same isoperimetric profile. By \cite[Lemma 3]{MR2198701},
\begin{equation}
\label{psiinePhi}
 \min\{\psi(r),\bar\psi(r)\}
 \geq\frac{\mu_0^2\Phi(r)^2}{2(1-\mu_0)^2},\qquad r\geq1.
\end{equation}

\begin{proof}[Proof of Theorem \ref{lazyGinf}]
Fix $\varepsilon\in(0,1)$ and write
$$
 H_\varepsilon=\int_4^{8/\varepsilon}\frac{du}{u\Phi(u)^2},
 \qquad
 J_\varepsilon=\frac{2(1-\mu_0)^2}{\mu_0^2}H_\varepsilon.
$$
On the event $\{I(n)\geq1+2J_\varepsilon\}$, let $m$ be the time of the $\lceil J_\varepsilon\rceil$-th isolated vertex. Since $\lceil J_\varepsilon\rceil\leq J_\varepsilon+1$, we have $I(n)-\lceil J_\varepsilon\rceil\geq J_\varepsilon$. Thus $m<n$ and both $[m]$ and $\{m+1,\ldots,n\}$ contain at least $J_\varepsilon$ isolated vertices. Proposition \ref{propevopsi} and (\ref{psiinePhi}) imply that
$$
 \sum_{z\in G}P_{0,m}(x,z)^2\leq\frac\varepsilon2,
 \qquad
 \sum_{z\in G}P_{m,n}(z,y)^2
 =\sum_{z\in G}\bar P_{0,n-m}(y,z)^2\leq\frac\varepsilon2.
$$
By Cauchy--Schwarz, $P_{0,n}(x,y)\leq\varepsilon/2$ on this event. Hence
$$
 \PP(S_n=y)\leq\frac\varepsilon2+
 \PP\bigl(I(n)<1+2J_\varepsilon\bigr).
$$
By (\ref{upbdprobisolinear}), the last term is at most $\varepsilon/2$ whenever
$$
 n\geq\frac{1}{1-\alpha}
 \max\left\{8(1+2J_\varepsilon),\ 18\log\frac{4}{\varepsilon}\right\}.
$$
The first term ensures $1+2J_\varepsilon\leq(1-\alpha)n/8$, and the second gives $2e^{-(1-\alpha)n/18}\leq\varepsilon/2$. Since $\Phi(u)\leq1$, we have $H_\varepsilon\geq\log(2/\varepsilon)$, and hence
\[
1+\log(4/\varepsilon)\leq C H_\varepsilon,
\qquad 0<\varepsilon<1.
\]
Both terms in the maximum are therefore bounded by $C(\mu_0)H_\varepsilon$, which gives the stated condition.
\end{proof}

\begin{proof}[Proof of Corollary \ref{corlazytran}]
Let $\mu_s=(\mu+\check\mu)/2$ and $\mu_*:=\min_{g\in\Gamma}\mu(g)$. The measure $\mu_s$ has support $\Gamma\cup\Gamma^{-1}$, its positive masses are at least $\mu_*/2$, and
$P_{\mu_s}(A,A^c)=P_\mu(A,A^c)$ for every finite $A\subset G$. The isoperimetric inequality of Coulhon and Saloff-Coste \cite{MR1232845} therefore gives, for every nonempty finite $A$,
\begin{equation}
\label{lowbdphiAi}
 \Phi(A)\geq\frac{\mu_*}{2}\frac{|\partial_E A|}{|A|}
 \geq\frac{\mu_*}{4R(2|A|)},
\end{equation}
where $\partial_E A$ is the edge boundary in $G_\Gamma$, and $R(m)$ is the smallest radius of a ball containing at least $m$ vertices.

In case (i), this gives $\Phi(r)\geq c r^{-1/d}$ for $r\geq1$, where $c>0$ depends on $G$ and $\mu$. Therefore
$$
 \int_4^{8/\varepsilon}\frac{du}{u\Phi(u)^2}
 \leq\frac{1}{c^2}\int_4^{8/\varepsilon}u^{2/d-1}\,du
 =\frac{d}{2c^2}\left[\left(\frac8\varepsilon\right)^{2/d}-4^{2/d}\right]
 \leq C\varepsilon^{-2/d}.
$$
Choose $\varepsilon=C'n^{-d/2}$ for large $n$. The right-hand side of the time condition in Theorem \ref{lazyGinf} is at most $C''(C')^{-2/d}n$, so taking $C'$ sufficiently large makes that condition hold. Increasing $C_1$ handles the remaining values of $n$. The same argument applies whenever the volume growth is bounded below by a constant times $r^d$, so at least cubic growth implies transience.

In case (ii), (\ref{lowbdphiAi}) gives $\Phi(r)\geq c/\log(2r)$, and hence the integral in Theorem \ref{lazyGinf} is at most $C[\log(2/\varepsilon)]^3$. Choosing $\varepsilon=2e^{-c'n^{1/3}}$ for large $n$ proves (ii). In case (iii), $\inf_{r\geq1}\Phi(r)>0$, so the same integral is at most $C\log(2/\varepsilon)$. This proves (iii).
\end{proof}

\subsection{Elephant polynomials}

To prove Corollary \ref{twopointcor}, we use the elephant polynomials introduced by Gu\'erin, Laulin and Raschel \cite{MR4897955}. They satisfy
\begin{equation}
  \label{defelepoly}
  \left\{\begin{aligned}
R_1(x)&:=x, \\
R_{n+1}(x)&:=xR_n(x)-\frac{\alpha}{n}(1-x^2)R_n'(x), \qquad n\geq 1,
\end{aligned}\right.
\end{equation}
where $\alpha\in\RR$. Let $S^{(E)}$ be the ERW on $\ZZ$ with memory parameter $p\in[0,1]$ and first step uniform on $\{-1,1\}$. Its characteristic function satisfies
\begin{equation}
    \label{charERWpoly}
    \varphi_n^{(E)}(t)=R_n(\cos t), \qquad t\in\RR,
\end{equation}
where $\alpha=2p-1$, see \cite{MR4897955}. When $\alpha\in[0,1]$, this ERW is also a usual SRRW with reinforcement parameter $\alpha$ and step distribution uniform on $\{-1,1\}$; when $\alpha\in(-1,0)$, it has the same law as the generalized SRRW on $\ZZ$ with reinforcement parameter $|\alpha|$, step distribution uniform on $\{-1,1\}$, and transformations $T_n=-\operatorname{Id}$.

For $m\in\ZZ_L$ and $k\in[L-1]$, write $\chi_k^{(L)}(m):=e^{2\mathrm{i}km\pi/L}$.
\begin{lemma}
\label{PSn012Rn0}
Let $S$ be a usual SRRW on $\ZZ_L$ with reinforcement parameter $\alpha\in[0,1)$ and step distribution $\mu$, and let $(R_n)_{n\geq 1}$ have the same parameter $\alpha$.
\begin{enumerate}[(i)]
\item If $L=2$ and $\mu(0)=\mu(1)=1/2$, then
\[
\EE\chi_1^{(2)}(S_n)=2\PP(S_n=0)-1=\mathrm{i}^nR_n(0), \qquad n\geq 1.
\]
\item If $L\geq 3$ and $\mu(-1)=\mu(1)=1/2$, then
\[
\EE\chi_k^{(L)}(S_n)=R_n(\cos(2k\pi/L)), \qquad k\in[L-1],\ n\geq 1.
\]
\end{enumerate}
\end{lemma}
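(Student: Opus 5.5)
Both parts can be derived from the recursion (\ref{defelepoly}) by a generating-function argument on the count of steps. In part (ii) this can even be reduced to the known identity (\ref{charERWpoly}). Since $\alpha\in[0,1)$, the usual SRRW on $\ZZ$ with step law uniform on $\{-1,1\}$ is the ERW $S^{(E)}$ with $p=(1+\alpha)/2$. Its reduction modulo $L$ is exactly the usual SRRW on $\ZZ_L$ with $\mu(\pm1)=1/2$, because reduction modulo $L$ commutes with repeating a past step. Consequently $\EE\chi_k^{(L)}(S_n)=\EE e^{\mathrm{i}tS^{(E)}_n}$ with $t=2k\pi/L$. Since the law of $S^{(E)}_n$ is symmetric, this equals $\varphi^{(E)}_n(t)=R_n(\cos t)$ by (\ref{charERWpoly}). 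That settles (ii).

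For (i), the approach is to treat the $\ZZ_2$ walk directly with the counting argument underlying (\ref{charERWpoly}). Let $N_n$ be the number of steps equal to $1$ among $X_1,\dots,X_n$. Then $\chi_1^{(2)}(S_n)=(-1)^{N_n}$. The identity $\EE(-1)^{N_n}=\PP(S_n=0)-\PP(S_n=1)=2\PP(S_n=0)-1$ is immediate. Given $\mathcal H_n$, the next step equals $1$ with probability $q_{n+1}=\alpha N_n/n+(1-\alpha)/2$.

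Consider the generating function $F_n(s):=\EE s^{N_n}$. From the transition rule,
\[
\EE\bigl(s^{N_{n+1}}\mid\mathcal H_n\bigr)=s^{N_n}\Bigl(1+(s-1)\bigl(\tfrac{1-\alpha}{2}+\tfrac{\alpha N_n}{n}\bigr)\Bigr).
\]
Taking expectations and using $\EE N_ns^{N_n}=sF_n'(s)$ gives
\[
F_{n+1}(s)=\tfrac{1+s}{2}F_n(s)-\tfrac{\alpha}{2}(1-s)F_n(s)+\tfrac{\alpha}{n}s(s-1)F_n'(s),
\qquad F_1(s)=\tfrac{1+s}{2}.
\]
The key step is to find a change of variables that turns this into (\ref{defelepoly}). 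The natural candidate is the one from the symmetric $\pm1$ walk: with $s=e^{2\mathrm{i}\theta}$ one has $\EE e^{\mathrm{i}\theta(2N_n-n)}=R_n(\cos\theta)$. This suggests proving
\[
F_n(s)=s^{n/2}R_n\Bigl(\tfrac{s^{1/2}+s^{-1/2}}{2}\Bigr)
\]
as a formal or polynomial identity by induction on $n$. To carry out the induction, substitute $x=(s^{1/2}+s^{-1/2})/2$ into the recursion for $F_{n+1}$ and check that it reproduces the recursion for $R_{n+1}$. This uses $dx/ds=(s^{1/2}-s^{-1/2})/(4s)$ and $1-x^2=-(s^{1/2}-s^{-1/2})^2/4$. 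The check is routine algebra. Alternatively, it follows from (\ref{charERWpoly}) and analytic continuation in $\theta$, because both sides are polynomials in $e^{\mathrm{i}\theta}$. The main obstacle is only bookkeeping of this substitution, in particular the sign and the branch of $s^{1/2}$.

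Finally, take $s=-1$ and choose the branch $s^{1/2}=\mathrm{i}$. Then $x=(\mathrm{i}-\mathrm{i})/2=0$ and $s^{n/2}=\mathrm{i}^n$. Hence
\[
\EE(-1)^{N_n}=\mathrm{i}^nR_n(0),
\]
which proves (i). Choosing the other branch gives $(-\mathrm{i})^nR_n(0)$, and this agrees because $R_n$ has the parity of $n$: the recursion shows $R_n(-x)=(-1)^nR_n(x)$.
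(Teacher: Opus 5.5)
Your argument for (ii) is the paper's. Your fallback for (i) is exactly the paper's proof of (i): you apply (\ref{charERWpoly}) to the $\pm1$ walk $2N_n-n=N_n(1)-N_n(0)$ at $\theta=\pi/2$, which gives $\EE(-1)^{N_n}=\mathrm{i}^n\,\EE e^{\mathrm{i}\pi(2N_n-n)/2}=\mathrm{i}^nR_n(0)$. No analytic continuation is needed there, since $\theta=\pi/2$ is real, so the proposal is correct as it stands.

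The generating-function route you present as the main line has a sign error, however. Since $1+(s-1)\frac{1-\alpha}{2}=\frac{1+s}{2}+\frac{\alpha}{2}(1-s)$, the middle term of your recursion must be $+\frac{\alpha}{2}(1-s)F_n(s)$, not $-\frac{\alpha}{2}(1-s)F_n(s)$. For example, $\PP(N_2=0)=\frac{1+\alpha}{4}$, whereas your recursion gives $F_2(0)=\frac{1-\alpha}{4}$. With your sign, the ``routine algebra'' leaves an uncancelled $-\alpha(1-s)G_n$, and the induction does not close.

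Once the sign is fixed, the induction does go through, and it gives a genuinely different, self-contained proof of (i). Put $w=s^{1/2}$, $x=(w+w^{-1})/2$ and $G_n(s)=s^{n/2}R_n(x)$.
\begin{enumerate}[(a)]
\item The term $\frac n2 s^{n/2-1}R_n(x)$ in $G_n'$ contributes $\frac{\alpha}{2}(s-1)G_n$, which cancels the corrected middle term.
\item The identities $\frac{1+s}{2}=wx$ and $(s-1)\frac{w-w^{-1}}{4}=-w(1-x^2)$ turn the remainder into $w^{n+1}\bigl(xR_n(x)-\frac{\alpha}{n}(1-x^2)R_n'(x)\bigr)=G_{n+1}(s)$.
\item The base case is $G_1=\frac{1+s}{2}=F_1$.
\end{enumerate}
This derives (i) from (\ref{defelepoly}) alone, without citing (\ref{charERWpoly}). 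On $|s|=1$ it even reproves (\ref{charERWpoly}) for this $\pm1$ walk. The paper's route is instead a one-line reduction to that known identity.
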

\begin{proof}
Let $N_n(g)$ count the steps equal to $g$ by time $n$. In (i), the process $N_n(1)-N_n(0)$ is a usual SRRW on $\ZZ$ with step distribution uniform on $\{-1,1\}$. Since $S_n\equiv N_n(1)\pmod 2$ and $N_n(1)+N_n(0)=n$, (\ref{charERWpoly}) gives
\[
\EE\chi_1^{(2)}(S_n)
=e^{\mathrm{i}n\pi/2}\EE e^{\mathrm{i}\pi(N_n(1)-N_n(0))/2}
=\mathrm{i}^nR_n(0).
\]
This expectation also equals $2\PP(S_n=0)-1$. In (ii), apply (\ref{charERWpoly}) to $N_n(1)-N_n(-1)$ and use $S_n\equiv N_n(1)-N_n(-1)\pmod L$.
\end{proof}

The forest representation gives an exponential bound for $R_n(x)$ when $x\in(-1,1)$. See \cite[Figure 1]{MR4897955} for an illustration.
\begin{corollary}
\label{expdecayEP}
If $\alpha\in(-1,1)$, then for every $x\in(-1,1)$ and $n\geq 1$,
\[
|R_n(x)|\leq |x|^{(1-|\alpha|)n/8}+2e^{-(1-|\alpha|)n/18}.
\]
\end{corollary}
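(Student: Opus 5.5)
The plan is to realize $R_n(x)$ as a characteristic function of a generalized SRRW on $\ZZ$, and then bound that characteristic function through the forest representation, exactly as in the proof of Proposition \ref{Strandabove3}.

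First the representation. Write $x=\cos t$ with $t\in(0,\pi)$. By (\ref{charERWpoly}) together with the remark after it, $R_n(\cos t)=\EE e^{\mathrm{i}tS_n}$. Here $S$ is a generalized SRRW on $\ZZ$ with reinforcement parameter $|\alpha|\in[0,1)$ and step distribution $\mu$ uniform on $\{-1,1\}$. The transformations are $T_n=\operatorname{Id}$ if $\alpha\geq0$ and $T_n=-\operatorname{Id}$ if $\alpha<0$. Formally, (\ref{charERWpoly}) is stated for $\alpha=2p-1\in[-1,1]$, and the recursion (\ref{defelepoly}) with parameter $\alpha$ defines the same polynomials, so nothing further is needed.

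Next, condition on $\mathcal K_n$ from (\ref{defconditioning}). As in (\ref{SndecomRd}), write $S_n=\sum_{j\in\mathscr I_n}g_j+\sum_{j\notin\mathscr I_n}X_j$. The second sum is $\mathcal K_n$-measurable. Conditionally on $\mathcal K_n$, the $g_j$ with $j\in\mathscr I_n$ are i.i.d.\ uniform on $\{\pm1\}$, so
\[
\bigl|\EE(e^{\mathrm{i}tS_n}\mid\mathcal K_n)\bigr|=|\cos t|^{I(n)}=|x|^{I(n)}.
\]
Taking expectations and using $|x|\leq1$ gives
\[
|R_n(x)|\leq\EE|x|^{I(n)}\leq|x|^{(1-|\alpha|)n/8}+\PP\Bigl(I(n)\leq\tfrac{(1-|\alpha|)n}{8}\Bigr).
\]
The last probability is at most $2e^{-(1-|\alpha|)n/18}$ by (\ref{upbdprobisolinear}), applied with reinforcement parameter $|\alpha|$. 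The law of the forest depends only on that parameter, not on the transformations.

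The edge case $x=0$ ($t=\pi/2$) needs no special treatment: there $|x|^{I(n)}=0$ when $I(n)\geq1$, and the bound above still holds.

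The only point needing care is the identification for negative $\alpha$. The ERW with $p<1/2$ must coincide in law with the SRRW with counterbalanced steps and parameter $|\alpha|$. To check this, note that a uniformly chosen past step is repeated with probability $p$. Alternatively, the counterbalanced walk negates it with probability $|\alpha|$ and otherwise takes a fresh uniform step. This gives repeat probability $(1-|\alpha|)/2=(1+\alpha)/2=p$, which matches. The paper states this identification, so I would simply cite it.
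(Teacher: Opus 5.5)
Your proof is correct and follows essentially the same route as the paper. You realize $R_n(\cos t)$ as the characteristic function of a generalized SRRW with parameter $|\alpha|$ and $T_n=\pm\operatorname{Id}$, bound its modulus by $|x|^{I(n)}$ using the free steps at isolated vertices, and conclude with (\ref{upbdprobisolinear}). The only cosmetic difference is that you condition on $\mathcal K_n$, as in the proof of Proposition \ref{Strandabove3}, whereas the paper conditions on $\F_n$ alone, writes $S_n=\sum_j m_jg_j$, and bounds the non-isolated factors $|\cos(tm_j)|$ by one.
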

\begin{proof}
Put $\rho:=|\alpha|$, and let $\sigma=1$ when $\alpha\geq0$ and $\sigma=-1$ when $\alpha<0$. Let $S$ be the generalized SRRW on $\ZZ$ with reinforcement parameter $\rho$, step distribution uniform on $\{-1,1\}$, and transformations $T_n:=\sigma\operatorname{Id}$. Then $S$ has the same law as the ERW with memory parameter $p=(1+\alpha)/2$, and (\ref{charERWpoly}) gives
\[
\EE e^{\mathrm i tS_n}=R_n(\cos t),\qquad t\in\RR.
\]
By Proposition \ref{consgSRRWRRT},
\begin{equation}
    \label{SnsumCjng}
    S_n=\sum_{j=1}^n m_jg_j,\qquad
    m_j:=\sum_{k\in\CalC_{j,n}}\sigma^{\operatorname{depth}(k)},
\end{equation}
where $\operatorname{depth}(k)$ is the distance from $k$ to the root of its cluster, and $(g_j)_{j\geq 1}$ are i.i.d. symmetric $\{-1,1\}$-valued random variables independent of $\F_n$. For $t=\arccos x$, conditioning on $\F_n$ gives
\begin{equation}
    \label{Rn2jpiL}
|R_n(x)|\leq\EE\prod_{j=1}^n|\cos(tm_j)| \leq |x|^{(1-\rho)n/8}
   +\PP\left(I(n)\leq\frac{(1-\rho)n}{8}\right).
\end{equation}
Indeed, an isolated cluster is a single vertex, so $m_j=1$ and it contributes a factor $|\cos t|=|x|$, while every other factor is at most one. The conclusion follows from (\ref{upbdprobisolinear}) applied with $\alpha$ replaced by $\rho$.
\end{proof}

For the walk on $\ZZ_2$ in Lemma \ref{PSn012Rn0}(i) we have $\sigma=1$, so $m_j=|\CalC_{j,n}|$ and (\ref{SnsumCjng}) holds modulo two with i.i.d. uniform $\{0,1\}$-valued labels $g_j$. Given $\F_n$, the probability that $S_n=0$ is one if every cluster has even size, and is $1/2$ otherwise. Consequently,
\begin{equation}
    \label{Rn0allevenclu}
\mathrm{i}^nR_n(0)=2\PP(S_n=0)-1
=\PP(\text{every cluster in }\F_n\text{ has even size}).
\end{equation}
This identity motivates the following estimates.

\begin{proposition}
  \label{Rnsecbase}
For $\alpha\in[0,1]$, the elephant polynomials admit the representation
\begin{equation}
    \label{Rnxbasis}
R_n(x)=\sum_{k=0}^{\lfloor n/2\rfloor}(-1)^k\lambda_{n,k}x^{n-2k}(1-x^2)^k, \qquad x\in\RR,
\end{equation}
where the coefficients are non-negative and, for $n\geq 1$ and $0\leq k\leq\lfloor n/2\rfloor$,
\begin{equation}
    \label{estlambdank}
e^{-(1-\alpha)n/(2+\alpha)}\binom{n}{2k}\alpha^k
\leq\lambda_{n,k}\leq\binom{n}{2k}\alpha^k.
\end{equation}
Here and below, $\alpha^0=1$, including when $\alpha=0$.
\end{proposition}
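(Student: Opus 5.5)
The plan is a direct induction on $n$ through the recursion (\ref{defelepoly}). Write $y:=1-x^2$ and posit $R_n(x)=\sum_k(-1)^k\lambda_{n,k}x^{n-2k}y^k$, with $\lambda_{1,0}=1$. The existence of such an expansion, with the sign pattern built in, follows from one computation that yields a recursion for the coefficients. Since $y'=-2x$,
\[
\frac{d}{dx}\bigl(x^{a}y^k\bigr)=a\,x^{a-1}y^k-2k\,x^{a+1}y^{k-1}.
\]
Hence
\[
y\,\frac{d}{dx}\bigl(x^{a}y^k\bigr)=a\,x^{a-1}y^{k+1}-2k\,x^{a+1}y^k .
\]
Substituting into $R_{n+1}=xR_n-\frac{\alpha}{n}yR_n'$ with $a=n-2k$, and collecting the coefficient of $(-1)^kx^{n+1-2k}y^k$, I expect
\begin{equation*}
\lambda_{n+1,k}=\Bigl(1+\frac{2\alpha k}{n}\Bigr)\lambda_{n,k}+\frac{\alpha(n-2k+2)}{n}\,\lambda_{n,k-1},
\end{equation*}
with $\lambda_{n,k}:=0$ outside $0\le k\le\lfloor n/2\rfloor$. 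The two contributions come with the same sign: the $-\frac{\alpha}{n}$ factor and $(-1)^{k-1}$ combine to $(-1)^k$, and the $-2k$ term flips to a positive contribution to $\lambda_{n,k}$. This proves (\ref{Rnxbasis}), and for $\alpha\ge0$ it gives nonnegativity of the coefficients by induction.

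For the bounds (\ref{estlambdank}) I will compare with $b_{n,k}:=\binom{n}{2k}\alpha^k$, using two identities that follow from $\binom{n}{j}\frac{n-j}{n}=\binom{n-1}{j}$:
\[
\frac{2k}{n}\binom{n}{2k}=\binom{n-1}{2k-1},\qquad
\frac{n-2k+2}{n}\binom{n}{2k-2}=\binom{n-1}{2k-2}.
\]
Plugging $\lambda_{n,\cdot}=r\,b_{n,\cdot}$ into the recursion gives the weight
\[
\alpha^k\Bigl[\binom{n}{2k}+\alpha\binom{n-1}{2k-1}+\binom{n-1}{2k-2}\Bigr].
\]
When $\alpha=1$ this is exactly $\binom{n+1}{2k}\alpha^k$, by Pascal's rule twice. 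Since $\alpha\le1$, the induction hypothesis $\lambda_{n,j}\le b_{n,j}$ immediately gives the upper bound $\lambda_{n+1,k}\le b_{n+1,k}$.

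For the lower bound I will induct on the claim $\lambda_{n,k}\ge\theta_n b_{n,k}$ with $\theta_1=1$. The bracket above equals $\binom{n+1}{2k}-(1-\alpha)\binom{n-1}{2k-1}$, so it suffices to take
\[
\theta_{n+1}=\theta_n\bigl(1-(1-\alpha)c_{n,k}\bigr),\qquad c_{n,k}:=\frac{\binom{n-1}{2k-1}}{\binom{n+1}{2k}}=\frac{2k(n+1-2k)}{n(n+1)},
\]
minimized over $k$. The key elementary step, and the one I expect to need the most care, is the uniform bound $c_{n,k}\le 1/3$ for all integers $n\ge1$ and $1\le k\le\lfloor (n+1)/2\rfloor$. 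For $k=0$ the factor is $1$ and there is nothing to prove. In general $2k(n+1-2k)\le (n+1)^2/4$ handles large $n$ (giving $c\le (n+1)/(4n)\le 1/3$ for $n\ge3$), and the small cases $n=1,2$ can be checked by hand ($c_{1,1}=0$, $c_{2,1}=1/3$).

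With $u:=(1-\alpha)c_{n,k}\le(1-\alpha)/3<1$, I then use $-\log(1-u)\le u/(1-u)$ to get
\[
-\log(1-u)\le\frac{(1-\alpha)/3}{1-(1-\alpha)/3}=\frac{1-\alpha}{2+\alpha}.
\]
Each step therefore loses at most the factor $e^{-(1-\alpha)/(2+\alpha)}$, so $\theta_n\ge e^{-(1-\alpha)(n-1)/(2+\alpha)}\ge e^{-(1-\alpha)n/(2+\alpha)}$. For $\alpha=0$ both sides vanish when $k\ge1$, consistent with the convention $\alpha^0=1$, so that case needs no separate treatment.
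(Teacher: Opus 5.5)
Your proposal is correct and follows essentially the same route as the paper: the same coefficient recursion, the same comparison with $\binom{n}{2k}\alpha^k$ through the Pascal-type identity with defect $(1-\alpha)\binom{n-1}{2k-1}$, the same ratio bound $2k(n+1-2k)/(n(n+1))\le 1/3$, and the same inequality $-\log(1-t)\le t/(1-t)$. The paper also notes that the polynomials $x^{n-2k}(1-x^2)^k$ are linearly independent, so the coefficients $\lambda_{n,k}$ are unique; your existence-by-construction argument does not need this.
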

\begin{remark}
When $\alpha=1$, these are the Chebyshev polynomials of the first kind, and $\lambda_{n,k}=\binom{n}{2k}$.
\end{remark}
\begin{proof}
The polynomials $x^{n-2k}(1-x^2)^k$, $0\leq k\leq\lfloor n/2\rfloor$, are linearly independent, as their lowest non-zero powers are distinct. Set $\lambda_{1,0}=1$ and use the convention $\lambda_{n,k}=0$ outside $0\leq k\leq\lfloor n/2\rfloor$. Substituting (\ref{Rnxbasis}) into (\ref{defelepoly}) gives
\begin{equation}
  \label{lambdarec}
\lambda_{n+1,k}
=\left(1+\frac{2\alpha k}{n}\right)\lambda_{n,k}
+\alpha\left(1-\frac{2(k-1)}{n}\right)\lambda_{n,k-1}\geq 0,
\end{equation}
for $n\geq 1$ and $0\leq k\leq\lfloor(n+1)/2\rfloor$. This proves (\ref{Rnxbasis}) and the non-negativity of its coefficients.

Write $B_{n,k}:=\binom{n}{2k}$, with binomial coefficients outside their usual range equal to zero. Pascal's identity gives
\begin{equation}
  \label{lamnkupn1}
\left(1+\frac{2\alpha k}{n}\right)B_{n,k}
+\left(1-\frac{2(k-1)}{n}\right)B_{n,k-1}
=B_{n+1,k}-(1-\alpha)\binom{n-1}{2k-1}.
\end{equation}
Moreover,
\begin{equation}
  \label{arigeomeanine}
0\leq\frac{\binom{n-1}{2k-1}}{\binom{n+1}{2k}}
=\frac{2k(n+1-2k)}{n(n+1)}\leq\frac13.
\end{equation}
For $n\geq 3$, the last inequality follows from $2k(n+1-2k)\leq(n+1)^2/4$; the cases $n=1,2$ follow directly. Let $q_\alpha:=(2+\alpha)/3$. The ratio bound (\ref{arigeomeanine}) gives
\[
q_\alpha B_{n+1,k}
\leq B_{n+1,k}-(1-\alpha)\binom{n-1}{2k-1}
\leq B_{n+1,k}.
\]
 Using (\ref{lambdarec}) and (\ref{lamnkupn1}), by induction, we can obtain
\begin{equation}
  \label{lamnklown1}
q_\alpha^{n-1}\alpha^k B_{n,k}\leq\lambda_{n,k}\leq\alpha^k B_{n,k}.
\end{equation}
Indeed, for $\alpha=0$, (\ref{lambdarec}) gives $\lambda_{n,0}=1$ and $\lambda_{n,k}=0$ for $k\geq1$. For $\alpha>0$, the induction follows by multiplying the two terms in (\ref{lamnkupn1}) by $\alpha^k$ and applying the preceding bounds.  Finally, $-\log(1-t)\leq t/(1-t)$ for $0\leq t<1$. Taking $t=(1-\alpha)/3$ gives
\begin{equation}
  \label{lnx1xinalpha}
-\log q_\alpha
=-\log\left(1-\frac{1-\alpha}{3}\right)
\leq\frac{1-\alpha}{2+\alpha},
\end{equation}
which proves (\ref{estlambdank}).
\end{proof}

\begin{proof}[Proof of Corollary \ref{twopointcor}]
If $n$ is odd, then $\F_n$ has a cluster of odd size, so (\ref{Rn0allevenclu}) gives $\PP(S_n=0)=1/2$. For every $n\geq 1$, Proposition \ref{Rnsecbase} gives
\[
2\PP(S_{2n}=0)-1=(-1)^nR_{2n}(0)=\lambda_{2n,n}
\in\left[e^{-2(1-\alpha)n/(2+\alpha)}\alpha^n,\alpha^n\right].
\]
Since $2/(2+\alpha)\leq1$, this implies (\ref{Z2estP2n0}). For fixed $n$, its logarithm shows that
\[
\log(2\PP(S_{2n}=0)-1)=n\log\alpha+O(n),\qquad\alpha\to0+,
\]
which proves the first limit. For the second limit, use $\alpha^n\leq\alpha$ for the lower bound and $1-y^n\leq n(1-y)$ for $y\in[0,1]$ for the upper bound. Equation (\ref{Z2estP2n0}) then gives
\[
\frac{1-\alpha}{2}\leq1-\PP(S_{2n}=0)
\leq\frac n2\left(1-\alpha e^{-(1-\alpha)}\right).
\]
Since $e^{-(1-\alpha)}\geq\alpha$, the last term is at most
\[
\frac n2(1-\alpha^2)
=\frac n2(1-\alpha)(1+\alpha)\leq n(1-\alpha).
\]
Taking logarithms proves the second limit.
\end{proof}

\section{Acknowledgments}

Yuval Peres is supported by the National Natural Science Foundation of China under Grant Number W2531011. Shuo Qin is supported by the China Postdoctoral Science Foundation under Grant Number 2025M773086, 2026T190815, and National Natural Science Foundation of China under Grant Number 12271284.

\bibliographystyle{plain}
\bibliography{math_ref}

\end{document}